\documentclass{article}

\usepackage{amsmath}
\usepackage{amssymb}
\usepackage{graphicx}
\usepackage{bm}
\usepackage{mathdots}
\usepackage{booktabs}
\usepackage{rotating}
\usepackage{listings}
\usepackage[ruled,linesnumbered]{algorithm2e}
\usepackage[a4paper,left=2.8cm,right=2.8cm,top=2.5cm,bottom=2.5cm]{geometry}
\usepackage{fancyhdr}
\usepackage[framemethod=tikz]{mdframed}
\newtheorem{theorem}{Theorem}[section]
\newtheorem{corollary}[theorem]{Corollary}
\newtheorem{proposition}[theorem]{Proposition}

\newtheorem{lemma}[theorem]{Lemma}

\makeatletter 
\@addtoreset{equation}{section}
\makeatother  

\newtheorem{remark}[theorem]{Remark}

\newenvironment{proof}{{\noindent\it Proof.}\quad}{\hfill $\square$\\}

\usepackage{url}
\usepackage{mathtools}
\usepackage{lipsum}
\usepackage{tcolorbox}

\begin{document}
\title{Hyperinterpolation beyond exact cubature: a spectral multiplier approach}

\author{Hao-Ning Wu\footnotemark[1]}

\renewcommand{\thefootnote}{\fnsymbol{footnote}}
\footnotetext[2]{Department of Mathematics, University of Georgia, Athens, GA 30602, USA (hnwu@uga.edu)}

\maketitle
\begin{abstract}
We study hyperinterpolation and its spectral multiplier variants on the sphere under weak cubature assumptions formulated through Sobolev discrepancy estimates. In contrast with classical hyperinterpolation theory, our framework does not require exact polynomial cubature formulas or Marcinkiewicz--Zygmund inequalities. The main idea is to interpret the discretization error as the action of a spectral multiplier operator on the cubature discrepancy measure. This viewpoint separates approximation properties of the underlying spectral operator from geometric properties of the sampling measure, leading to stable Sobolev approximation estimates under weak cubature assumptions. The resulting theory applies to a broad class of spectral approximation operators, including sharp spectral projections, compactly supported smooth filters, Bessel potential operators, and heat kernel operators. For sufficiently localized spectral multipliers, we additionally obtain uniform \(L^\infty\)-stability of the corresponding discrete approximation operators.
The results establish a direct connection between hyperinterpolation, Sobolev discrepancy, and quasi-Monte Carlo (QMC) designs, showing that stable approximation from scattered data can be achieved without exact polynomial reproduction.
\end{abstract}

\section{Introduction}

Let
\[
\mathbb S^d
:=
\{x\in\mathbb R^{d+1}:|x|=1\},
\qquad d\ge1,
\]
be the unit sphere equipped with the normalized surface measure \(\sigma\).
Denote by \(\mathcal H_\ell(\mathbb S^d)\) the space of spherical harmonics
of degree \(\ell\), and let
\[
\{Y_{\ell,k}:1\le k\le Z(d,\ell)\}
\]
be an orthonormal basis of \(\mathcal H_\ell(\mathbb S^d)\) in
\(L^2(\mathbb S^d)\), where
\[
Z(d,\ell)
=
(2\ell+d-1)
\frac{\Gamma(\ell+d-1)}
{\Gamma(d)\Gamma(\ell+1)}
\sim
\frac{2}{\Gamma(d)}
\ell^{d-1},
\qquad \ell\to\infty,
\]
denotes the dimension of \(\mathcal H_\ell(\mathbb S^d)\) and $\Gamma(\cdot)$ is the Gamma function.

The spherical harmonics are eigenfunctions of the Laplace--Beltrami operator:
\[
-\Delta_{\mathbb S^d}Y_{\ell,k}
=
\lambda_\ell Y_{\ell,k},
\qquad
\lambda_\ell=\ell(\ell+d-1).
\]
Let
\[
\Pi_n
:=
\bigoplus_{\ell=0}^n
\mathcal H_\ell(\mathbb S^d)
\]
denote the space of spherical harmonics of degree at most \(n\).
The orthogonal projection
\[
\mathcal P_n:L^2(\mathbb S^d)\to\Pi_n
\]
admits the representation
\begin{equation}\label{equ:orthogonalprojection}
\mathcal P_n f(x)
=
\int_{\mathbb S^d}
G_n(x,y)f(y)\,d\sigma(y),
\end{equation}
where
\begin{equation}\label{equ:projectionkernel}
G_n(x,y)
=
\sum_{\ell=0}^n
\sum_{k=1}^{Z(d,\ell)}
Y_{\ell,k}(x)Y_{\ell,k}(y)
\end{equation}
is the reproducing kernel of \(\Pi_n\).

Hyperinterpolation, introduced by Sloan in \cite{sloan1995polynomial}, replaces the integral in the orthogonal projection \eqref{equ:orthogonalprojection} by a cubature rule. Given sampling points
$\mathcal{X}_m=\{x_j\}_{j=1}^m\subset\mathbb S^d$
and positive weights $\{w_j\}_{j=1}^m$,
the hyperinterpolation operator is defined by
\begin{equation}\label{equ:hyperinterpolation}
\mathcal L_n f(x)
=
\sum_{j=1}^m
w_jf(x_j)G_n(x,x_j).
\end{equation}
This construction yields a fully discrete approximation operator while preserving many approximation properties of the orthogonal projector.

Classical hyperinterpolation theory relies heavily on strong algebraic assumptions on the underlying cubature rule. Typically one assumes polynomial exactness up to sufficiently high degree of $2n$, as realized for instance by spherical $t$-designs \cite{delsarte1991geometriae} and related cubature constructions. Under such assumptions, the hyperinterpolation operator \eqref{equ:hyperinterpolation} inherits stability and approximation properties from the continuous projection operator \eqref{equ:orthogonalprojection}. 
An important further development is the use of
Marcinkiewicz--Zygmund (MZ) inequalities \cite{filbir2011marcinkiewicz,mhaskar2001spherical}
in the analysis of hyperinterpolation and related approximation schemes;
see, e.g.,
\cite{an2022quadrature,an2024bypassing,an2024hyperinterpolation}.
Instead of exact polynomial cubature,
the MZ framework assumes discrete norm equivalences on polynomial spaces.
This substantially enlarges the class of admissible sampling sets
and provides a flexible framework for approximation from scattered data
on the sphere.
We also refer the reader to the recent survey
\cite{MR5019366}
for further developments in hyperinterpolation theory.

Nevertheless, both exact cubature and MZ inequalities remain fundamentally algebraic assumptions. In many practical situations one only has access to scattered quasi-uniform point sets without exact polynomial reproduction properties or stable discrete norm equivalences. This naturally raises the following question:

\medskip

\emph{To what extent can hyperinterpolation be developed under substantially weaker assumptions on the sampling measure?}

\medskip

In the present paper, we study this problem from a different perspective based on
Sobolev discrepancy and spectral multiplier operators.
Let
\begin{equation}\label{equ:discretemeasure}
\mu_m
=
\sum_{j=1}^m w_j \delta_{x_j}
\end{equation}
be the discrete sampling measure associated with the point set $\mathcal{X}_m$, where $\delta_{x_j}$ denotes the Dirac mass at $x_j$, and define
the discrepancy measure
\begin{equation}\label{equ:discrepancyerrror}
\nu_m
:=
\mu_m-\sigma.
\end{equation}
Instead of assuming exact polynomial cubature or MZ inequalities, we only require
the weak cubature condition
\begin{equation}\label{equ:introcondition}
\|\nu_m\|_{H^{-r}}
\le
C\delta_m^r,
\end{equation}
where \(\delta_m\) denotes the mesh norm of the sampling set and $C>0$ is a constant independent of $m$.
Equivalently, this condition controls the worst-case integration error
over the Sobolev space \(H^r(\mathbb{S}^d)\).
Such discrepancy estimates arise naturally in the theory of
QMC designs \cite{MR3365840,MR3246811},
whose connection with the present framework will be discussed in
Section~\ref{sec:connection_with_spherical_qmc_designs}.

More generally, let
\[
\eta_n:[0,\infty)\to\mathbb R
\]
be a family of spectral multipliers, and define the associated spectral approximation operator
\begin{equation}\label{equ:spectralapproximation}
\mathcal T_n f(x)
:=
\int_{\mathbb S^d}
\Phi_n(x,y)f(y)\,d\sigma(y),
\end{equation}
where
\[
\Phi_n(x,y)
=
\sum_{\ell=0}^\infty
\eta_n(\lambda_\ell)
\sum_{k=1}^{Z(d,\ell)}
Y_{\ell,k}(x)Y_{\ell,k}(y).
\]
The corresponding discrete approximation operator is thus defined by
\begin{equation}\label{equ:spectrallyfilteredhyperinterpolation}
\mathcal F_n f(x)
=
\sum_{j=1}^m
w_jf(x_j)\Phi_n(x,x_j).
\end{equation}
The operator \eqref{equ:spectrallyfilteredhyperinterpolation} provides a unified framework for several classical approximation schemes on the sphere.
Indeed, choosing
\[
\eta_n(\lambda_\ell)
=
\mathbf 1_{\{\ell\le n\}},
\]
one recovers the classical hyperinterpolation operator associated with the sharp spectral projection, proposed in \cite{sloan1995polynomial}.
On the other hand, choosing
\[
\eta_n(\lambda_\ell)=h(\ell/n),
\]
where \(h\) is a compactly supported smooth filter,
yields the filtered hyperinterpolation operators studied in
\cite{MR3739962,MR4226998,MR4462410,MR2875103,sloan2012filtered,MR3685994}. More generally, the present framework also accommodates non-compactly supported spectral multipliers,
including Bessel potential operators and heat kernel operators.
Accordingly, we shall refer to
\eqref{equ:spectrallyfilteredhyperinterpolation}
as a \emph{spectral multiplier hyperinterpolation operator}, since it is obtained by discretizing
a spectral multiplier operator through cubature.

The key observation is that the discretization error admits the representation
\[
\mathcal{T}_n f-\mathcal{F}_n f
=
-\mathcal{T}_n(f\nu_m).
\]
Thus the approximation problem may be viewed as the action of the spectral multiplier
operator on the cubature discrepancy measure.
From this viewpoint, the analysis separates naturally into three independent
components:

\begin{itemize}
\item approximation properties of the continuous spectral operator;
\item smoothing estimates of the spectral multiplier operator;
\item Sobolev discrepancy estimates of the sampling measure.
\end{itemize}

Under these assumptions, we establish Sobolev approximation estimates for the spectral multiplier hyperinterpolation operator \eqref{equ:spectrallyfilteredhyperinterpolation} under the weak cubature assumption \eqref{equ:introcondition}.
The framework applies not only to classical sharp spectral projection,
but also to smooth spectral multipliers including compactly supported filters,
Bessel potential operators, and heat kernel operators.
For sufficiently localized spectral multipliers, we additionally obtain uniform
\(L^\infty\)-stability of the discrete approximation operator.
Thus spatial localization appears naturally as a mechanism for uniform stability,
while the Sobolev approximation theory itself is driven primarily by smoothing
estimates acting on the discrepancy measure.

\textbf{Notation.} Throughout the paper, \(C\) denotes a generic positive constant
whose value may change from line to line.
Unless otherwise specified, all constants $C$ are independent of
the discretization parameters \(m\), \(n\), and the function \(f\).
For two nonnegative quantities \(A\) and \(B\),
we write $A \lesssim B$
if there exists a constant \(C>0\) such that
$A \le C B$,
and write
$A \sim B$
if both \(A\lesssim B\) and \(B\lesssim A\) hold.

\textbf{Organization.}
The paper is organized as follows.
Section~\ref{equ:spherical_geometry_and_harmonic_analysis}
introduces the geometric setting, spherical harmonic analysis,
and Sobolev spaces on the sphere.
Section~\ref{sec:main_results}
contains the main approximation and stability results.
Section~\ref{sec:examples_of_spectrally_localized_kernels}
presents several important classes of spectral multipliers,
including sharp spectral projection, compactly supported smooth filters,
Bessel potential multipliers, and heat kernel operators.
Finally,
Section~\ref{sec:connection_with_spherical_qmc_designs}
discusses the connection between the present framework
and spherical QMC designs.

\section{Spherical geometry and harmonic analysis}\label{equ:spherical_geometry_and_harmonic_analysis}

In this section we collect several standard facts on the sphere. These ingredients form the analytic foundation for the approximation results proved later.

\subsection{Sampling geometry}

Let
\[
\mathcal{X}_m=\{x_j\}_{j=1}^m
\subset\mathbb S^d
\]
be a finite set of sampling points. The geodesic distance between any $x,y\in\mathbb{S}^d$ is defined by
\[
\rho(x,y):=\arccos(x\cdot y).
\]
 The mesh norm of $\mathcal{X}_m$ is thus defined by
\begin{equation}\label{equ:meshnorm}
\delta_m
:=
\sup_{x\in\mathbb S^d}
\min_{1\le j\le m}
\rho(x,x_j),
\end{equation}
and the separation radius is
\[
\gamma_m
:=
\frac12
\min_{i\neq j}
\rho(x_i,x_j).
\]
We say that the family $\{\mathcal{X}_m\}$ is quasi-uniform if there exists a constant $C>0$ independent of $m$ such that
\[
\delta_m\le C \gamma_m.
\]
Associated with the sampling set is a family of positive weights $\{w_j\}_{j=1}^m$,
which is assumed to satisfy 
\[
\sum_{j=1}^m w_j
=
\sigma(\mathbb S^d) = 1.
\]

\subsection{Spherical harmonics and Sobolev spaces}

Let $L^2(\mathbb{S}^d)$ denote the Hilbert space of all square-integrable functions on $\mathbb{S}^d$ with the inner product
$$
\langle f, g\rangle:=\int_{\mathbb{S}^d} f(x) g(x) d \sigma(x)
$$
and the induced norm 
\[\|f\|_{L^2}:=\sqrt{\langle f, f\rangle}=\sum_{\ell=0}^\infty
\sum_{k=1}^{Z(d,\ell)}
|\widehat f_{\ell,k}|^2,\] 
where
\[
\widehat f_{\ell,k}
=
\int_{\mathbb S^d}
f(x)Y_{\ell,k}(x)\,d\sigma(x).
\]
By ${C}(\mathbb{S}^d)$ we denote the space of continuous functions on $\mathbb{S}^d$, endowed with the uniform norm 
\[\|f\|_{L^\infty}:=\text{ess}\sup _{x \in \mathbb{S}^d}|f(x)|.\]
For $s\in\mathbb R$, the Sobolev space $H^s(\mathbb S^d)$
consists of all distributions $f$ such that
\[
\|f\|_{H^s}^2
:=
\sum_{\ell=0}^\infty
(1+\lambda_\ell)^s
\sum_{k=1}^{Z(d,\ell)}
|\widehat f_{\ell,k}|^2
<
\infty.
\]
In particular, for \(s>0\), the negative Sobolev space
\[
H^{-s}(\mathbb S^d)
:=
(H^s(\mathbb S^d))'
\]
can be also identified as the dual space of \(H^s(\mathbb S^d)\) with respect to the \(L^2\)-pairing, equipped with the norm 
\[
\|u\|_{H^{-s}}
=
\sup_{\|g\|_{H^s}\le1}
|\langle u,g\rangle|.
\]
We shall repeatedly use the Sobolev embedding theorem: if $s>d/2$, then
\[
H^s(\mathbb S^d)
\hookrightarrow
C(\mathbb S^d).
\]

\section{Main results}\label{sec:main_results}

Recall the discrepancy measure \eqref{equ:discrepancyerrror},
\[
\nu_m=\mu_m-\sigma,
\]
with $\mu_m$ defined by \eqref{equ:discretemeasure}. Since the discrete sampling measure $\mu_m$ contains Dirac masses,
we shall assume $r>d/2$ so that 
\[\nu_m \in H^{-r}(\mathbb{S}^d).\] 
Throughout this paper, we assume that there exists $r>d/2$ such that
\begin{equation}\label{equ:pointassumption}
\|\nu_m\|_{H^{-r}}
\le
C\delta_m^r,
\end{equation}
where $\delta_m$ is the mesh norm \eqref{equ:meshnorm} of the sampling set $\mathcal{X}_m$.
Equivalently, this assumption \eqref{equ:pointassumption} amounts to
\[
\sup_{\|g\|_{H^r}\le1}
\left|
\int_{\mathbb S^d}g\,d\sigma
-
\sum_{j=1}^m
w_jg(x_j)
\right|
\le
C\delta_m^r.
\]
This assumption controls only the cubature error of the integration functional and does not impose any exact polynomial reproduction property or discrete norm equivalence on polynomial spaces.

\subsection{Discretization as a distributional perturbation}

We first recall a
standard multiplier property of Sobolev spaces which allows us
to interpret products of the discrepancy measure $\nu_m$ with smooth
functions.
\begin{lemma}\label{lem:estimate}
Let $r> d/2$ and $s>d/2+r$. Then multiplication by
$f\in H^s(\mathbb S^d)$ defines a bounded operator on
$H^r(\mathbb S^d)$.
More precisely,
\[
\|fg\|_{H^r}
\lesssim
\|f\|_{H^s}
\|g\|_{H^r}
\]
for all $g\in H^r(\mathbb S^d)$.
Consequently, by duality,
\[
\|fu\|_{H^{-r}}
\lesssim
\|f\|_{H^s}
\|u\|_{H^{-r}}
\]
for all $u\in H^{-r}(\mathbb S^d)$.
\end{lemma}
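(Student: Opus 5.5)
We prove the multiplier estimate $\|fg\|_{H^r}\lesssim\|f\|_{H^s}\|g\|_{H^r}$ first and then obtain the negative-order bound by duality. The condition $s>d/2+r$ is stronger than necessary. It is enough that $s\ge r$ and $s>d/2$, and this lets us use the classical algebra-type estimate.

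\textbf{Step 1 (localization).} We pass to Euclidean Sobolev spaces. Cover $\mathbb S^d$ by finitely many coordinate charts $(U_i,\kappa_i)$, for instance stereographic projections or graphs over coordinate hyperplanes, and take a smooth partition of unity $\{\psi_i\}$ subordinate to the cover. Choose cutoffs $\tilde\psi_i\in C_c^\infty(U_i)$ with $\tilde\psi_i=1$ on $\operatorname{supp}\psi_i$. The spectrally defined norm $\|\cdot\|_{H^t}$ is the norm of $(1-\Delta_{\mathbb S^d})^{t/2}$, and for every real $t$ it is equivalent to $\sum_i\|(\psi_i u)\circ\kappa_i^{-1}\|_{H^t(\mathbb R^d)}$. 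This is the standard equivalence of Sobolev norms on compact manifolds. We take it as known; for integer $t$ it follows from elliptic regularity and in general from interpolation. Then
\[
\psi_i fg=(\tilde\psi_i f)(\psi_i g),
\]
so it suffices to prove the Euclidean estimate $\|FG\|_{H^r(\mathbb R^d)}\lesssim\|F\|_{H^s(\mathbb R^d)}\|G\|_{H^r(\mathbb R^d)}$ for compactly supported $F,G$. Here $\|\tilde\psi_i f\circ\kappa_i^{-1}\|_{H^s}\lesssim\|f\|_{H^s}$ by the same equivalence, because multiplication by smooth functions and composition with diffeomorphisms preserve $H^t$.

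\textbf{Step 2 (Euclidean estimate).} This is the main technical step, and we do it on the Fourier side. Write $\widehat{FG}=\widehat F*\widehat G$. Since $\langle\xi\rangle^r\lesssim\langle\xi-\eta\rangle^r+\langle\eta\rangle^r$ for $r\ge0$, we get
\[
\langle\xi\rangle^r|\widehat{FG}(\xi)|\lesssim \bigl(\langle\cdot\rangle^r|\widehat F|\bigr)*|\widehat G|+|\widehat F|*\bigl(\langle\cdot\rangle^r|\widehat G|\bigr).
\]
We bound the $L^2$ norm of each term with Young's inequality $L^2*L^1\subset L^2$.

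For the second term, $\|\widehat F\|_{L^1}\le\|\langle\cdot\rangle^{-s}\|_{L^2}\|F\|_{H^s}<\infty$ because $s>d/2$. Hence this term is at most $\|F\|_{H^s}\|G\|_{H^r}$.

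For the first term, $\|\widehat G\|_{L^1}\lesssim\|G\|_{H^r}$ because $r>d/2$. Also $\|\langle\cdot\rangle^r\widehat F\|_{L^2}\le\|F\|_{H^s}$ because $s\ge r$. Together these give the claimed bound.

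Note that this step uses both $r>d/2$ and $s>d/2+r$, the latter only through $s\ge r$ and $s>d/2$. If one wants to avoid charts entirely, the same convolution argument could be run directly with the linearization (Clebsch–Gordan) coefficients of spherical harmonics. Controlling those coefficients is messier, so we prefer the localization route.

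\textbf{Step 3 (duality).} Let $u\in H^{-r}$ and let $f$ be real-valued; for complex $f$ use $\bar f$ in the pairing. Define $fu$ as the distribution $\langle fu,g\rangle:=\langle u,fg\rangle$ for $g\in H^r$. This is consistent with the pointwise product when $u$ is a function, and for $u=\nu_m$ it gives $\sum_j w_j f(x_j)\delta_{x_j}-f\sigma$. By Step 2,
\[
|\langle fu,g\rangle|\le\|u\|_{H^{-r}}\|fg\|_{H^r}\lesssim\|u\|_{H^{-r}}\|f\|_{H^s}\|g\|_{H^r}.
\]
Taking the supremum over $\|g\|_{H^r}\le1$ yields $\|fu\|_{H^{-r}}\lesssim\|f\|_{H^s}\|u\|_{H^{-r}}$.

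The main obstacle is Step 1: justifying that the spectral Sobolev norm agrees with the chart-based norm for non-integer $r$ and $s$. We treat this as a standard fact, with interpolation as the fallback argument.
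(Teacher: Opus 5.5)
Your proof is correct, and its overall structure (positive-order multiplier estimate, then duality) is the same as the paper's. Your Step 3 is essentially the paper's duality argument word for word.

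The real difference is in the positive-order estimate $\|fg\|_{H^r}\lesssim\|f\|_{H^s}\|g\|_{H^r}$. The paper gives no argument for it: it simply asserts that $H^s(\mathbb S^d)$ acts as a multiplier algebra on $H^r(\mathbb S^d)$ when $s>d/2+r$. You prove it. You localize to charts, taking the equivalence of spectral and chart-based Sobolev norms as the one standard input. You then run the Fourier-side argument with $\langle\xi\rangle^r\lesssim\langle\xi-\eta\rangle^r+\langle\eta\rangle^r$ and Young's inequality $L^2*L^1\subset L^2$, using $r>d/2$ for $\widehat G\in L^1$ and $s>d/2$ for $\widehat F\in L^1$.

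Your route buys two things. First, the argument is self-contained modulo a genuinely standard fact. Second, it gives a sharper hypothesis: $s\ge r$ together with $r>d/2$ suffices. So the lemma's condition $s>d/2+r$ is stronger than needed. Since that condition enters Theorem~\ref{thm:main} only through this lemma, the theorem could be stated under the weaker assumption. The paper's route buys only brevity, and it leaves the key analytic input as an unproved citation.
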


\begin{proof}
Since \(s>d/2+r\), the Sobolev space \(H^s(S^d)\)
acts as a multiplier algebra on \(H^r(S^d)\):
\[
\|fg\|_{H^r}
\lesssim
\|f\|_{H^s}
\|g\|_{H^r}.
\]
The estimate on $H^{-r}$ follows by duality. Indeed, for \(u\in H^{-r}\) and \(v\in H^r\),
\[
|\langle fu,v\rangle|
=
|\langle u,fv\rangle|
\le
\|u\|_{H^{-r}}
\|fv\|_{H^r}
\lesssim
\|f\|_{H^s}
\|u\|_{H^{-r}}
\|v\|_{H^r}.
\]
Taking the supremum over
\(\|v\|_{H^r}\le1\)
gives the estimate.
\end{proof}

Recall the continuous spectral approximation operator
\(\mathcal T_n\)
and the discrete spectral multiplier hyperinterpolation operator
\(\mathcal F_n\)
 defined by
\eqref{equ:spectralapproximation}
and
\eqref{equ:spectrallyfilteredhyperinterpolation},
respectively.
The central observation of the present paper is that the discretization error
can be interpreted as the action of the spectral multiplier operator on the
cubature discrepancy measure.
Since \(r>d/2\), the discrepancy measure \eqref{equ:discrepancyerrror}
belongs to \(H^{-r}(\mathbb S^d)\).
Moreover, by Lemma~\ref{lem:estimate}, $f\nu_m\in H^{-r}(\mathbb S^d)$
whenever
$f\in H^s(\mathbb S^d)$ with
$s>r+d/2$.
A direct computation gives
\[
\begin{aligned}
\mathcal T_n f(x)-\mathcal F_n f(x)
&=
\int_{\mathbb S^d}
\Phi_n(x,y)f(y)\,d\sigma(y)
-
\sum_{j=1}^m
w_j f(x_j)\Phi_n(x,x_j)
\\
&=
\int_{\mathbb S^d}
\Phi_n(x,y)f(y)\,d(\sigma-\mu_m)(y)
\\
&=
-\mathcal T_n(f\nu_m)(x),
\end{aligned}
\]
where \(T_n\) is extended to distributions by
\[
T_n u(x):=\langle u,\Phi_n(x,\cdot)\rangle .
\]
Hence,
\begin{equation}\label{equ:identity}
\mathcal T_n f-\mathcal F_n f
=
-\mathcal T_n(f\nu_m).
\end{equation}
Therefore, the discretization error is reduced to estimating the action of the
spectral multiplier operator on the discrepancy distribution \(f\nu_m\).
From this viewpoint, the approximation problem separates naturally into two
independent components:
\begin{itemize}
\item Sobolev discrepancy estimates for the sampling measure;
\item smoothing properties of the spectral multiplier operator.
\end{itemize}

\subsection{Sobolev error bounds}

We now state the main approximation result.

\begin{theorem}[Sobolev approximation under weak cubature]\label{thm:main}
Let $r>d/2$, $0\le q<s$ and $s>d/2+r$. Let $f\in H^s(\mathbb{S}^d)$. Assume that the sampling measure satisfies
\begin{equation}
\|\nu_m\|_{H^{-r}}
\lesssim\delta_m^r.\tag{A1}
\end{equation}
Assume moreover that the continuous spectral multiplier operator $\mathcal{T}_n$ satisfies the approximation estimate
\[
\|f-\mathcal T_n f\|_{H^q}
\lesssim n^{-(s-q)}
\|f\|_{H^s}\tag{A2}
\]
and the smoothing estimate
\[
\|\mathcal T_n u\|_{H^q}
\lesssim n^{r+q}
\|u\|_{H^{-r}}\tag{A3}
\]
for all $u\in H^{-r}(\mathbb{S}^d)$.
Then
\[
\|f-\mathcal F_n f\|_{H^q}
\lesssim
\left(
n^{-(s-q)}
+
\delta_m^r n^{r+q}
\right)
\|f\|_{H^s}.
\]
\end{theorem}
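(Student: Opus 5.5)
The plan is to split the error with the triangle inequality, inserting the continuous operator $\mathcal T_n f$:
\[
\|f-\mathcal F_n f\|_{H^q}
\le
\|f-\mathcal T_n f\|_{H^q}
+
\|\mathcal T_n f-\mathcal F_n f\|_{H^q}.
\]
The first term is the approximation error of the continuous spectral operator. Assumption (A2) bounds it directly by $n^{-(s-q)}\|f\|_{H^s}$, so nothing further is needed there.

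For the second term I will invoke the identity \eqref{equ:identity}, which gives $\mathcal T_n f-\mathcal F_n f=-\mathcal T_n(f\nu_m)$. Because $r>d/2$, the measure $\nu_m$ lies in $H^{-r}(\mathbb S^d)$. Because $s>d/2+r$, Lemma~\ref{lem:estimate} then gives $f\nu_m\in H^{-r}(\mathbb S^d)$ together with the bound
\[
\|f\nu_m\|_{H^{-r}}\lesssim \|f\|_{H^s}\|\nu_m\|_{H^{-r}}.
\]
With $u=f\nu_m$ admissible, the smoothing estimate (A3) yields $\|\mathcal T_n(f\nu_m)\|_{H^q}\lesssim n^{r+q}\|f\nu_m\|_{H^{-r}}$. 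Chaining this with the lemma and with (A1) gives
\[
\|\mathcal T_n f-\mathcal F_n f\|_{H^q}\lesssim n^{r+q}\delta_m^r\|f\|_{H^s}.
\]
Adding the two contributions produces the claimed bound. All implicit constants come from (A1)--(A3) and Lemma~\ref{lem:estimate}, so they are independent of $m$, $n$ and $f$.

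The only delicate point is justifying \eqref{equ:identity} rigorously. This means checking that the extension $\mathcal T_n u(x)=\langle u,\Phi_n(x,\cdot)\rangle$ agrees with the pointwise definition of $\mathcal F_n f$ when $u=f\mu_m$, and that the product $f\nu_m$ is well defined as the distribution $g\mapsto\langle\nu_m,fg\rangle$. The pairing $\langle \mu_m, f\Phi_n(x,\cdot)\rangle=\sum_j w_j f(x_j)\Phi_n(x,x_j)$ requires $f\Phi_n(x,\cdot)$ to be continuous. Continuity of $f$ follows from $H^s\hookrightarrow C(\mathbb S^d)$ since $s>d/2$. For $\Phi_n(x,\cdot)$, I will assume, as is implicit in the paper's setup, that it lies in $H^r$ (for example when $\eta_n$ decays fast enough). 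If instead $\Phi_n(x,\cdot)$ were not in $H^r$, I would rely on (A3) holding as stated, with $\mathcal T_n$ defined on all of $H^{-r}$ by duality, which settles the point.
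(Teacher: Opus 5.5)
Your proposal is correct and follows the paper's proof step for step. You split the error through $\mathcal T_n f$, use (A2) for the first term, and bound the second term via the identity \eqref{equ:identity} together with (A3), Lemma~\ref{lem:estimate}, and (A1). Your added remarks on justifying \eqref{equ:identity} go slightly beyond the paper, which simply asserts that identity. Those remarks are continuity of $f$ by Sobolev embedding, and the need for $\Phi_n(x,\cdot)$ to be an admissible test function, or else defining $\mathcal T_n$ on $H^{-r}$ spectrally or by duality. They are a reasonable supplement.
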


\begin{proof}
We decompose the error as
\[
f-\mathcal F_n f
=
(f-\mathcal T_n f)
+
(\mathcal T_n f-\mathcal F_n f).
\]
The first term is controlled by the approximation property (A2) of the continuous spectral multiplier operator.
For the second term, we use the identity \eqref{equ:identity}.
Hence the smoothing estimate (A3) gives
\[
\|\mathcal T_n f-\mathcal F_n f\|_{H^q}
\lesssim n^{r+q}
\|f\nu_m\|_{H^{-r}}.
\]
Since $s>r+d/2$, Lemma \ref{lem:estimate} implies
\[
\|f\nu_m\|_{H^{-r}}
\lesssim
\|f\|_{H^s}
\|\nu_m\|_{H^{-r}}.
\]
Using the weak cubature assumption (A1),
we obtain
\[
\|\mathcal T_n f-\mathcal F_n f\|_{H^q}
\lesssim\delta_m^r n^{r+q}
\|f\|_{H^s}.
\]
Combining these estimates yields the desired error bound.
\end{proof}

The theorem reveals a natural balance between spectral approximation and cubature accuracy. The term
$n^{-(s-q)}$
corresponds to the approximation error of the continuous spectral multiplier operator, while
$\delta_m^r n^{r+q}$
measures the interaction between
weak cubature accuracy and the smoothing properties of the spectral multiplier operator.
Balancing the two contributions in the sense of 
\[n^{-(s-q)}\sim \delta_m^r n^{r+q}\]  
leads to the choice
\begin{equation}\label{equ:optimalchoice}
n
\sim
\delta_m^{-r/(s+r)}.
\end{equation}

\begin{corollary}[Balanced approximation rate]
Under the assumptions of Theorem \ref{thm:main} and the choice \eqref{equ:optimalchoice},
\[
\|f-\mathcal F_n f\|_{H^q}
\lesssim
\delta_m^{\frac{r(s-q)}{s+r}}
\|f\|_{H^s}.
\]
\end{corollary}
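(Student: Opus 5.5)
The plan is to substitute the choice \eqref{equ:optimalchoice} directly into the bound of Theorem~\ref{thm:main}. The only requirement is that both terms become the same power of \(\delta_m\).

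Concretely, take \(n\sim\delta_m^{-r/(s+r)}\), meaning \(n\) is an integer comparable to \(\delta_m^{-r/(s+r)}\) up to constants independent of \(m\). For instance \(n=\lceil \delta_m^{-r/(s+r)}\rceil\), which is admissible since \(\delta_m\le\pi\) keeps the quantity bounded below.

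The first term then gives
\[
n^{-(s-q)}\sim \delta_m^{\frac{r(s-q)}{s+r}}.
\]
For the second term,
\[
\delta_m^r n^{r+q}\sim \delta_m^{\,r-\frac{r(r+q)}{s+r}}=\delta_m^{\frac{r(s+r)-r(r+q)}{s+r}}=\delta_m^{\frac{r(s-q)}{s+r}}.
\]
So both contributions have the same order. Inserting them into Theorem~\ref{thm:main} yields \(\|f-\mathcal F_nf\|_{H^q}\lesssim \delta_m^{r(s-q)/(s+r)}\|f\|_{H^s}\).

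The only point needing care is that the implicit constants stay independent of \(m\), \(n\) and \(f\). The two-sided comparability \(n\sim\delta_m^{-r/(s+r)}\) contributes fixed factors \(c^{-(s-q)}\) and \(C^{r+q}\). These depend only on \(r\), \(s\), \(q\) and the comparability constants, and are absorbed into \(\lesssim\).

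There is no genuine obstacle. If \(\delta_m\) is not small, \(n\) is bounded below by a positive constant and the estimate is trivially consistent. Since \(q<s\), the exponent \(r(s-q)/(s+r)\) is positive, so the rate is meaningful as \(\delta_m\to0\).
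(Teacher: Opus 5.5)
Your proposal is correct and matches the paper's argument: the corollary follows by inserting the balancing choice $n\sim\delta_m^{-r/(s+r)}$, obtained from $n^{-(s-q)}\sim\delta_m^r n^{r+q}$, into the bound of Theorem~\ref{thm:main}, which makes both terms of order $\delta_m^{r(s-q)/(s+r)}$. Your remarks on choosing an integer $n$ and on keeping the constants independent of $m$, $n$ and $f$ are valid and spell out details the paper leaves implicit.
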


\begin{corollary}[$L^2$ approximation]
Under the assumptions of Theorem \ref{thm:main} and the choice \eqref{equ:optimalchoice},
\[
\|f-\mathcal F_n f\|_{L^2}
\lesssim
\delta_m^{\frac{rs}{s+r}}
\|f\|_{H^s}.
\]
\end{corollary}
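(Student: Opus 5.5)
The $L^2$ estimate is the special case $q=0$ of the preceding corollary on balanced approximation rates, so the plan is to specialize rather than redo any analysis. First I check that $q=0$ is admissible in Theorem~\ref{thm:main}: the theorem requires $0\le q<s$, and since $s>d/2+r>0$ we have $0<s$. With $q=0$, assumption (A2) becomes the $L^2$ approximation estimate $\|f-\mathcal T_n f\|_{L^2}\lesssim n^{-s}\|f\|_{H^s}$. Assumption (A3) becomes $\|\mathcal T_n u\|_{L^2}\lesssim n^{r}\|u\|_{H^{-r}}$. Here we use $H^0(\mathbb S^d)=L^2(\mathbb S^d)$ with identical norms, since $(1+\lambda_\ell)^0=1$ in the definition of $\|\cdot\|_{H^s}$.

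Theorem~\ref{thm:main} then gives
\[
\|f-\mathcal F_n f\|_{L^2}\lesssim \left(n^{-s}+\delta_m^r n^{r}\right)\|f\|_{H^s}.
\]
Next I substitute the choice \eqref{equ:optimalchoice}, $n\sim\delta_m^{-r/(s+r)}$. It gives $n^{-s}\sim\delta_m^{rs/(s+r)}$, and
\[
\delta_m^r n^r\sim \delta_m^{\,r-\frac{r^2}{s+r}}=\delta_m^{\frac{rs}{s+r}}.
\]
The two terms balance, and together they yield $\delta_m^{rs/(s+r)}\|f\|_{H^s}$.

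Equivalently, one can put $q=0$ directly into the exponent $r(s-q)/(s+r)$ of the balanced-rate corollary. There is no genuine obstacle here. The only points needing care are that $H^0$ coincides with $L^2$ and that the choice of $n$ in \eqref{equ:optimalchoice} does not depend on $q$, so the same $n$ is admissible. Since $n$ must be an integer, we take $n=\lceil\delta_m^{-r/(s+r)}\rceil$, which changes the bound only by constants.
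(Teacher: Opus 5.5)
Your proposal is correct and matches the paper's route: the paper gives no separate proof, because the $L^2$ bound is just the case $q=0$ of Theorem~\ref{thm:main} (equivalently, of the balanced-rate corollary) under the $q$-independent choice \eqref{equ:optimalchoice}, using $H^0=L^2$. Your exponent computation $\delta_m^{r}n^{r}\sim\delta_m^{rs/(s+r)}\sim n^{-s}$ is right.
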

\begin{remark}
The approximation rate above differs from the classical
\(L^2\)-based estimates for hyperinterpolation.
This reflects the fact that the present framework assumes only
weak Sobolev control of the cubature discrepancy measure and does
not rely on exact polynomial cubature formulas or
MZ stability estimates.
In particular, there is no underlying discrete
\(L^2\)-projection structure in the present setting.
Consequently, the discretization error cannot be removed
through algebraic cancellation and must instead be controlled
analytically through smoothing estimates acting on the discrepancy
measure.
\end{remark}

\begin{corollary}[$L^\infty$ approximation]
Under the assumptions of Theorem \ref{thm:main} and the choice \eqref{equ:optimalchoice}, assume additionally that
$q>d/2$.
Then
\[
\|f-\mathcal F_n f\|_{L^\infty}
\lesssim
\delta_m^{\frac{r(s-q)}{s+r}}
\|f\|_{H^s}.
\]
\end{corollary}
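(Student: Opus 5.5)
The plan is to combine the Sobolev embedding theorem recalled in Section~\ref{equ:spherical_geometry_and_harmonic_analysis} with the balanced rate of the first corollary. Since $q>d/2$, we have the continuous embedding $H^q(\mathbb S^d)\hookrightarrow C(\mathbb S^d)$, and hence
\[
\|g\|_{L^\infty}\lesssim\|g\|_{H^q}
\qquad\text{for all } g\in H^q(\mathbb S^d).
\]
We will apply this with $g=f-\mathcal F_n f$.

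The first step is to confirm that $g$ lies in $H^q$, so that the embedding applies pointwise and not merely almost everywhere. The function $f$ belongs to $H^s\subset H^q$ because $s>q$. The discrete operator $\mathcal F_n f$ is the finite sum $\sum_j w_j f(x_j)\Phi_n(\cdot,x_j)$, where the point values $f(x_j)$ are well defined since $s>d/2$. Its $H^q$-membership follows from the decomposition $\mathcal F_n f=\mathcal T_n f+\mathcal T_n(f\nu_m)$. The first term is in $H^q$ by (A2). The second term is in $H^q$ by (A3) combined with Lemma~\ref{lem:estimate}, which applies because $s>d/2+r$. Consequently $g$ is a continuous function, and its sup norm agrees with its essential sup norm.

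The second step is the estimate itself. By Theorem~\ref{thm:main},
\[
\|g\|_{H^q}\lesssim\bigl(n^{-(s-q)}+\delta_m^r n^{r+q}\bigr)\|f\|_{H^s}.
\]
We then substitute the choice \eqref{equ:optimalchoice} exactly as in the balanced-rate corollary. This gives
\[
n^{-(s-q)}\sim\delta_m^{r(s-q)/(s+r)}.
\]
The second term equals $\delta_m^{\,r-r(r+q)/(s+r)}=\delta_m^{r(s-q)/(s+r)}$, so both contributions are of the same order. Chaining this bound with the embedding inequality yields the stated $L^\infty$ bound.

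There is no substantial obstacle. The only points to check are that the embedding constant depends only on $q$ and $d$, so it is independent of $m$, $n$ and $f$, and that the hypotheses $r>d/2$, $0\le q<s$ and $s>d/2+r$ remain compatible with the extra requirement $q>d/2$. They are compatible, for example whenever $d/2<q<s$.
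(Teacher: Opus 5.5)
Your proposal is correct and is essentially the argument the paper intends (it gives no separate proof): apply the embedding $H^q(\mathbb S^d)\hookrightarrow C(\mathbb S^d)$, valid since $q>d/2$, to $f-\mathcal F_n f$, and insert the balanced $H^q$ rate from the preceding corollary. Your extra check that $f-\mathcal F_n f\in H^q$, via $\mathcal F_n f=\mathcal T_n f+\mathcal T_n(f\nu_m)$, is a harmless refinement that the paper leaves implicit.
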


\subsection{$L^\infty$ stability}

A second important consequence of spectral localization is the
uniform stability of the discrete approximation operator.
This extends the filtered hyperinterpolation stability result of
Sloan and Womersley \cite{sloan2012filtered}
in two directions:
first, the present framework does not require exact cubature formulas;
second, it applies to a broader class of spectral multipliers
beyond compactly supported smooth filters,
including the examples discussed in
Section~\ref{sec:examples_of_spectrally_localized_kernels}.

The localization properties of $\Phi_n$ play a central role. Roughly speaking, smooth spectral multipliers generate kernels that are spatially localized.
Typical localization estimates take the form
\begin{equation}\label{equ:localization}
|\Phi_n(x,y)|
\le
C_Ln^d(1+n\rho(x,y))^{-L},
\end{equation}
where the decay exponent $L$ depends on the smoothness of the multiplier;
see, e.g.,
\cite{MR3060033,MR2673702,MR2475947,MR1826335,MR2237162}.

\begin{theorem}[$L^\infty$ stability]\label{thm:uniformstability}
Assume that the sampling sets are quasi-uniform and that
\[
0<w_j\lesssim \delta_m^d.
\]
Assume that the kernel satisfies the localization estimate \eqref{equ:localization} with $L>d+1$.
Then
\begin{equation}\label{equ:uniformstability}
\|\mathcal F_n f\|_{L^\infty}
\le C
\|f\|_{L^\infty},
\end{equation}
uniformly in $m$ and $n$, provided
\begin{equation}\label{equ:3.2condition}
n\lesssim \delta_m^{-1}.
\end{equation}
\end{theorem}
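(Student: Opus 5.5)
The plan is to bound the Lebesgue constant of $\mathcal F_n$ directly. For every $x\in\mathbb S^d$ we have
\[
|\mathcal F_n f(x)|\le \|f\|_{L^\infty}\sum_{j=1}^m w_j|\Phi_n(x,x_j)|,
\]
so it suffices to show that
\[
\sup_{x}\sum_{j=1}^m w_j|\Phi_n(x,x_j)|\le C
\]
uniformly in $m$ and $n$. Substituting the weight bound $w_j\lesssim\delta_m^d$ and the localization estimate \eqref{equ:localization} reduces this to
\[
\delta_m^d n^d\sum_{j=1}^m (1+n\rho(x,x_j))^{-L}\lesssim 1 .
\]

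To estimate the sum, I will fix $x$ and split the points into geodesic annuli. Put $A_0=\{j:\rho(x,x_j)<\delta_m\}$ and, for $k\ge1$, $A_k=\{j:k\delta_m\le\rho(x,x_j)<(k+1)\delta_m\}$, with $k\lesssim \delta_m^{-1}$.

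The key step is a packing count based on quasi-uniformity. The caps $B(x_j,\gamma_m)$ are pairwise disjoint, and $\gamma_m\gtrsim\delta_m$. Every cap attached to a point of $A_k$ lies in the shell $\{y:(k-1)\delta_m-\gamma_m\le \rho(x,y)\le (k+2)\delta_m\}$, whose $\sigma$-measure is $\lesssim k^{d-1}\delta_m^d$ for $k\ge1$ and $\lesssim\delta_m^d$ for $k=0$. Each cap has measure $\sim\gamma_m^d\gtrsim\delta_m^d$. Hence $\#A_k\lesssim (1+k)^{d-1}$, and the constants here depend only on $d$ and the quasi-uniformity constant.

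Next I combine the count with the kernel decay. On $A_k$ we have $1+n\rho(x,x_j)\ge 1+nk\delta_m$. Writing $t=n\delta_m$, which satisfies $t\lesssim1$ by \eqref{equ:3.2condition}, this gives
\[
\sum_j w_j|\Phi_n(x,x_j)|\lesssim t^d\sum_{k\ge0}(1+k)^{d-1}(1+tk)^{-L}.
\]
I expect the main obstacle to be bounding this sum uniformly in $t$: a naive estimate produces a factor $\delta_m^{-d}$ from the number of points. The fix is to split at $k\sim 1/t$.

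For $k\le 1/t$, I bound $(1+tk)^{-L}\le1$. This part then contributes $t^d\sum_{k\le1/t}(1+k)^{d-1}\lesssim t^d(1+1/t)^d\lesssim 1+t^d\lesssim 1$, where the last step uses $t\lesssim 1$.

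For $k>1/t$, I bound $(1+tk)^{-L}\le (tk)^{-L}$. Since $L>d$, this part contributes $t^{d-L}\sum_{k>1/t}k^{d-1-L}\lesssim t^{d-L}\,t^{L-d}=1$. This uses only $L>d$, so the hypothesis $L>d+1$ is more than enough. The extra margin may be used in the paper for a cruder integral comparison, or to absorb the boundary shell $k=0,1$.

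Taking the supremum over $x$ then gives \eqref{equ:uniformstability} with $C$ independent of $m$ and $n$.
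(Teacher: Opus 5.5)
Your proof is correct, and it differs from the paper's in how the annuli are chosen and how points are counted in each one.

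\textbf{The paper's route.} The paper decomposes at the kernel's scale, $A_k=\{j:\ k\le n\rho(x,x_j)<k+1\}$. It bounds $|A_k|$ by the number of points in the whole ball $B(x,(k+1)/n)$:
\[
|A_k|\lesssim \bigl(1+(k+1)/(n\delta_m)\bigr)^d\lesssim (n\delta_m)^{-d}(1+k)^d .
\]
This uses $n\delta_m\lesssim 1$ already at the counting stage. The factor $(n\delta_m)^{-d}$ then cancels $n^d\delta_m^d$ directly, so no split of the sum is needed. The remaining series $\sum_k(1+k)^{d-L}$ converges exactly because $L>d+1$.

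\textbf{Your route.} You decompose at the sampling scale $\delta_m$ and count points per shell with the disjoint-cap volume argument. This gives $(1+k)^{d-1}$ per annulus instead of a cumulative $(1+k)^d$. The price is that the kernel decay $(1+tk)^{-L}$ only takes effect for $k\gtrsim 1/t$, so you must split the sum there and use $t\lesssim 1$ on the near part. In return you need only $L>d$. This shows that the hypothesis $L>d+1$ in the theorem is an artifact of the cumulative ball count. Counting the paper's width-$1/n$ annuli by shell volume would give the same improvement.

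\textbf{One unstated step.} The upper endpoint $(k+2)\delta_m$ in your shell containment uses $\gamma_m\le\delta_m$, which you do not state. It holds automatically for $m\ge 2$. Otherwise the closed caps $\overline{B}(x_j,\delta_m)$ would be pairwise disjoint and cover the connected sphere, which is impossible.
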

\begin{remark}
For \(0<\delta_m\le 1\), the balancing choice
\eqref{equ:optimalchoice} automatically satisfies
\eqref{equ:3.2condition}, since
\[
\frac{r}{s+r}<1.
\]
\end{remark}

\begin{proof}
For \(x\in\mathbb S^d\), we have
\[
|\mathcal F_n f(x)|
\le
\|f\|_{L^\infty}
\sum_{j=1}^m w_j|\Phi_n(x,x_j)|.
\]
By the localization estimate \eqref{equ:localization},
\[
|\Phi_n(x,x_j)|
\lesssim n^d(1+n\rho(x,x_j))^{-L}.
\]
Hence
\[
|\mathcal F_n f(x)|
\lesssim\|f\|_{L^\infty}
\sum_{j=1}^m
w_j n^d(1+n\rho(x,x_j))^{-L}.
\]
Since \(w_j\lesssim \delta_m^d\), it remains to estimate
\[
\sum_{j=1}^m (1+n\rho(x,x_j))^{-L}.
\]
For each integer \(k\ge0\), define
\[
A_k
:=
\{j:\ k\le n\rho(x,x_j)<k+1\}.
\]
The annuli $A_k$ form a disjoint partition of the index set
$\{1,\dots,m\}$. Hence
\[
\sum_{j=1}^m (1+n\rho(x,x_j))^{-L}
=
\sum_{k=0}^{\infty}
\sum_{j\in A_k}
(1+n\rho(x,x_j))^{-L}.
\]
If $j\in A_k$, then
\[
1+n\rho(x,x_j)\ge 1+k.
\]
Since \(L>0\), this implies
\[
(1+n\rho(x,x_j))^{-L}
\le
(1+k)^{-L}.
\]
Consequently,
\[
\sum_{j\in A_k}
(1+n\rho(x,x_j))^{-L}
\le
|A_k|(1+k)^{-L}.
\]
Summing over \(k\ge0\), we obtain
\[
\sum_{j=1}^m (1+n\rho(x,x_j))^{-L}
\le
\sum_{k=0}^{\infty}
|A_k|(1+k)^{-L}.
\]
By quasi-uniformity, the number of sampling points in a geodesic ball
\(B(x,R)\) is bounded by
\[
\#\bigl(\mathcal{X}_m\cap B(x,R)\bigr)
\lesssim\left(1+\frac{R}{\delta_m}\right)^d .
\]
Since \(A_k\subset B(x,(k+1)/n)\), we obtain
\[
|A_k|
\lesssim\left(1+\frac{k+1}{n\delta_m}\right)^d.
\]
Using \(n\lesssim\delta_m^{-1}\), we have \(n\delta_m\lesssim1\), and therefore
\[
1+\frac{k+1}{n\delta_m}
\lesssim\frac{k+1}{n\delta_m}.
\]
Thus
\[
|A_k|
\lesssim(n\delta_m)^{-d}(1+k)^d.
\]

It follows that
\[
\sum_{j=1}^m (1+n\rho(x,x_j))^{-L}
\lesssim(n\delta_m)^{-d}
\sum_{k=0}^\infty (1+k)^{d-L}.
\]
Since \(L>d+1\), the series converges. Hence
\[
\sum_{j=1}^m (1+n\rho(x,x_j))^{-L}
\lesssim(n\delta_m)^{-d}.
\]
Consequently,
\[
\sum_{j=1}^m w_j|\Phi_n(x,x_j)|
\lesssim n^d\delta_m^d (n\delta_m)^{-d}
\le
C.
\]
Therefore
\[
|\mathcal F_n f(x)|
\le
C\|f\|_{L^\infty}.
\]
Taking the supremum over \(x\in\mathbb S^d\) proves $\|\mathcal F_n f\|_{L^\infty}
\le C\|f\|_{L^\infty}$.
\end{proof}

\begin{remark}
The stability theorem shows that spectral localization stabilizes the discrete approximation operator itself. This is precisely the mechanism that fails for sharp spectral projection kernels in classical hyperinterpolation.
\end{remark}

\section{Examples of spectral multipliers}\label{sec:examples_of_spectrally_localized_kernels}

In this section we verify the assumptions in Section \ref{sec:main_results} for several important classes of spectral multipliers. The main point is that sufficiently smooth spectral multipliers simultaneously provide:

\begin{enumerate}
\item approximation property (A2),
\item spectral smoothing (A3),
\item (for filtered version) spatial localization \eqref{equ:localization} of the associated kernel.
\end{enumerate}

\subsection{Sharp spectral projection}

We first consider the classical sharp spectral projection corresponding to the
multiplier
\[
\eta_n(\lambda_\ell)
=
\mathbf 1_{\{\ell\le n\}}.
\]
In this case,
\[
\mathcal{T}_n=\mathcal{P}_n,
\]
the orthogonal projection \eqref{equ:orthogonalprojection} onto the spherical polynomial space \(\Pi_n\), and the
corresponding discrete approximation operator $\mathcal{F}_n$ reduces to the classical
hyperinterpolation operator \eqref{equ:hyperinterpolation}.

We first verify the approximation property, which can be also found in \cite{MR2274179}.
\begin{proposition}
Let \(0\le q < s\) and \(f\in H^s(\mathbb{S}^d)\). Then
\[
\|f-\mathcal{P}_n f\|_{H^q}
\lesssim n^{-(s-q)} \|f\|_{H^s}.
\]
\end{proposition}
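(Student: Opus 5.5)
The plan is to argue directly on the spectral side, using the definition of the \(H^q\) and \(H^s\) norms through the coefficients \(\widehat f_{\ell,k}\). Since \(\mathcal P_n\) keeps exactly the harmonic components of degree \(\ell\le n\), the error \(f-\mathcal P_n f\) has Fourier coefficients \(\widehat f_{\ell,k}\) for \(\ell>n\) and zero otherwise. This gives
\[
\|f-\mathcal P_n f\|_{H^q}^2
=
\sum_{\ell>n}(1+\lambda_\ell)^q\sum_{k=1}^{Z(d,\ell)}|\widehat f_{\ell,k}|^2 .
\]

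Next, I would factor each weight as \((1+\lambda_\ell)^q=(1+\lambda_\ell)^{-(s-q)}(1+\lambda_\ell)^s\). Since \(s-q>0\) and \(\lambda_\ell\) is increasing in \(\ell\), we have \(\sup_{\ell>n}(1+\lambda_\ell)^{-(s-q)}=(1+\lambda_{n+1})^{-(s-q)}\). Using \(\lambda_{n+1}=(n+1)(n+d)\ge (n+1)^2\ge n^2\), this yields
\[
\|f-\mathcal P_n f\|_{H^q}^2
\le (1+\lambda_{n+1})^{-(s-q)}\sum_{\ell>n}(1+\lambda_\ell)^s\sum_k|\widehat f_{\ell,k}|^2
\le n^{-2(s-q)}\|f\|_{H^s}^2 .
\]
Taking square roots gives the claim with constant \(1\). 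For \(n=0\), interpret \(n^{-(s-q)}\) as a constant: the bound \(\|f-\mathcal P_0 f\|_{H^q}\le\|f\|_{H^s}\) is then trivial.

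There is no real obstacle here. The only points requiring care are the monotonicity of \(\lambda_\ell\) and the elementary lower bound \(1+\lambda_{n+1}\gtrsim n^2\). One should also observe that membership \(f\in H^s\) makes all the series converge, so that rearranging them is justified.
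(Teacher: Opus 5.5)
Your proof is correct and is the paper's argument: both expand the error in spherical harmonics, keep only the modes $\ell>n$, and bound $(1+\lambda_\ell)^q\le C\,n^{-2(s-q)}(1+\lambda_\ell)^s$ on those modes. The only difference is that you make the constant explicit through $1+\lambda_{n+1}\ge (n+1)^2\ge n^2$, whereas the paper simply appeals to $\lambda_\ell\sim\ell^2$.
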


\begin{proof}
Since
\[
f-\mathcal{P}_n f
=
\sum_{\ell>n}\sum_{k=1}^{Z(d,\ell)}
\widehat f_{\ell,k} Y_{\ell,k},
\]
we obtain
\[
\|f-\mathcal{P}_n f\|_{H^q}^2
=
\sum_{\ell>n}
(1+\lambda_\ell)^q
\sum_{k=1}^{Z(d,\ell)}
|\widehat f_{\ell,k}|^2.
\]
Since \(\lambda_\ell\sim \ell^2\), for \(\ell>n\) we have
\[
(1+\lambda_\ell)^q
\lesssim n^{-2(s-q)}
(1+\lambda_\ell)^s.
\]
Therefore,
\[
\|f-\mathcal{P}_n f\|_{H^q}^2
\lesssim n^{-2(s-q)}
\|f\|_{H^s}^2,
\]
which proves the result.
\end{proof}

We next verify the smoothing estimate.

\begin{proposition}
Let \(r>0\) and \(q\geq 0\). Then
\[
\|\mathcal{P}_n u\|_{H^q}
\lesssim n^{r+q}
\|u\|_{H^{-r}}.
\]
\end{proposition}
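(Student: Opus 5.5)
The plan is to work entirely on the spectral side, exactly as in the preceding proposition for the approximation property. For a distribution \(u\in H^{-r}(\mathbb S^d)\), the coefficients \(\widehat u_{\ell,k}=\langle u,Y_{\ell,k}\rangle\) are well defined because each \(Y_{\ell,k}\) is smooth. By the definition of \(\mathcal P_n\), extended to distributions via \(\mathcal P_n u(x)=\langle u,G_n(x,\cdot)\rangle\), we have
\[
\mathcal P_n u=\sum_{\ell=0}^n\sum_{k=1}^{Z(d,\ell)}\widehat u_{\ell,k}Y_{\ell,k}.
\]
Thus \(\mathcal P_n u\) is a polynomial in \(\Pi_n\). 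Its \(H^q\) norm is
\[
\|\mathcal P_n u\|_{H^q}^2=\sum_{\ell=0}^n(1+\lambda_\ell)^q\sum_{k}|\widehat u_{\ell,k}|^2 .
\]

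The key step is to insert the weight of the negative Sobolev norm. Write
\[
(1+\lambda_\ell)^q=(1+\lambda_\ell)^{q+r}(1+\lambda_\ell)^{-r}.
\]
For \(\ell\le n\), we have \(1+\lambda_\ell=1+\ell(\ell+d-1)\lesssim (1+n)^2\lesssim n^2\) when \(n\ge1\); for \(n=0\) the statement is trivial, or can be absorbed into constants. Since \(q+r\ge0\), this gives \((1+\lambda_\ell)^{q+r}\lesssim n^{2(q+r)}\). Therefore
\[
\|\mathcal P_n u\|_{H^q}^2\lesssim n^{2(r+q)}\sum_{\ell\le n}(1+\lambda_\ell)^{-r}\sum_k|\widehat u_{\ell,k}|^2\le n^{2(r+q)}\sum_{\ell\ge0}(1+\lambda_\ell)^{-r}\sum_k|\widehat u_{\ell,k}|^2 .
\]
Taking square roots gives the claimed bound, provided the last sum equals \(\|u\|_{H^{-r}}^2\).

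The only point needing care, which I expect to be the mild obstacle, is this identification: the paper defines \(\|u\|_{H^{-r}}\) as the dual norm \(\sup_{\|g\|_{H^r}\le1}|\langle u,g\rangle|\), not as a weighted coefficient sum. I will check that the two agree. For \(u\) with coefficients \(\widehat u_{\ell,k}\), Cauchy--Schwarz with the weights \((1+\lambda_\ell)^{\pm r/2}\) gives \(|\langle u,g\rangle|\le(\sum(1+\lambda_\ell)^{-r}|\widehat u_{\ell,k}|^2)^{1/2}\|g\|_{H^r}\). Conversely, choosing \(g\) with coefficients proportional to \((1+\lambda_\ell)^{-r}\widehat u_{\ell,k}\), truncated to finitely many degrees and normalized in \(H^r\), attains the bound in the limit. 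So the dual norm equals the coefficient norm. Alternatively, one can avoid the identification entirely by testing \(\mathcal P_n u\) against its own weighted version: set \(g=\sum_{\ell\le n}(1+\lambda_\ell)^q\widehat u_{\ell,k}Y_{\ell,k}\), note that \(\|\mathcal P_n u\|_{H^q}^2=\langle u,g\rangle\le\|u\|_{H^{-r}}\|g\|_{H^r}\), and compute \(\|g\|_{H^r}\lesssim n^{r+q}\|\mathcal P_n u\|_{H^q}\) by the same degree bound. Dividing by \(\|\mathcal P_n u\|_{H^q}\) then finishes the proof. I would present this second route, since it uses only the dual definition stated in the paper.
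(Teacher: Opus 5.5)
Your proof is correct and is essentially the paper's own argument: the paper writes $\|\mathcal P_n u\|_{H^q}^2$ as the truncated coefficient sum and uses $(1+\lambda_\ell)^q\lesssim n^{2(r+q)}(1+\lambda_\ell)^{-r}$ for $\ell\le n$. It takes $\|u\|_{H^{-r}}$ directly as the weighted coefficient norm, since $H^s$ is defined that way for all real $s$, so your duality variant is valid but not needed. One small correction: for $n=0$ the bound is not trivial but false when $r>0$, because the right side vanishes while $\widehat u_{0,1}$ need not, so $n\ge1$ must be assumed, as the paper tacitly does.
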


\begin{proof}
Since \(\mathcal{P}_n u\) is supported in frequencies \(\ell\le n\),
\[
\|\mathcal{P}_n u\|_{H^q}^2
=
\sum_{\ell\le n}
(1+\lambda_\ell)^q
\sum_{k=1}^{Z(d,\ell)}
|\widehat u_{\ell,k}|^2.
\]
For \(\ell\le n\),
\[
(1+\lambda_\ell)^q
\lesssim n^{2(r+q)}
(1+\lambda_\ell)^{-r}.
\]
Hence
\[
\|\mathcal{P}_n u\|_{H^q}^2
\lesssim n^{2(r+q)}
\|u\|_{H^{-r}}^2,
\]
which proves the estimate.
\end{proof}

Consequently, Theorem \ref{thm:main} applies also to the classical hyperinterpolation
operator under the weak cubature assumption (A1).

\begin{corollary}
Let \(0\le q < s\) and \(s>d/2+r\), and assume that the sampling measure satisfies (A1). Then
\[
\|f-\mathcal{L}_n f\|_{H^q}
\lesssim
\left(
n^{-(s-q)}
+
\delta_m^r n^{r+q}
\right)
\|f\|_{H^s}.
\]
The choice \eqref{equ:optimalchoice} yields
\[
\|f-\mathcal{L}_n f\|_{H^q}
\lesssim
\delta_m^{\frac{r(s-q)}{s+r}}
\|f\|_{H^s}.
\]
\end{corollary}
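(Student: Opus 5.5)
This corollary is a direct specialization of Theorem~\ref{thm:main} to the multiplier $\eta_n(\lambda_\ell)=\mathbf 1_{\{\ell\le n\}}$. In that case $\mathcal T_n=\mathcal P_n$ and $\mathcal F_n=\mathcal L_n$, so the plan is to check the three hypotheses (A1)--(A3) and then read off both conclusions.

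\textbf{Checking the hypotheses.} Hypothesis (A1) is assumed in the statement. Hypothesis (A2), with the same range $0\le q<s$, is exactly the first proposition of this subsection (the approximation estimate for $\mathcal P_n$). Hypothesis (A3) is the second proposition: it gives $\|\mathcal P_n u\|_{H^q}\lesssim n^{r+q}\|u\|_{H^{-r}}$ for every $r>0$ and $q\ge0$. In particular it holds for the fixed $r>d/2$ in (A1). The remaining parameter conditions of Theorem~\ref{thm:main}, namely $r>d/2$ and $s>d/2+r$, are inherited from the standing assumption~\eqref{equ:pointassumption} and from the statement. Two structural points also need checking. First, the identity~\eqref{equ:identity} applies here, since $G_n(x,\cdot)\in\Pi_n\subset H^r$, so $\mathcal P_n$ extends to distributions by pairing. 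Second, $f\nu_m\in H^{-r}$ by Lemma~\ref{lem:estimate}. With all hypotheses verified, Theorem~\ref{thm:main} yields the first bound $\bigl(n^{-(s-q)}+\delta_m^r n^{r+q}\bigr)\|f\|_{H^s}$.

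\textbf{Balanced rate.} I would substitute $n\sim\delta_m^{-r/(s+r)}$ into each term. The first term becomes $n^{-(s-q)}\sim\delta_m^{r(s-q)/(s+r)}$. The second term has exponent
\[
r-\frac{r(r+q)}{s+r}=\frac{r(s-q)}{s+r},
\]
so both terms are of the same order. This is exactly the Balanced approximation rate corollary applied in this setting. Since $n$ must be an integer, I would take, for instance, $n=\lceil\delta_m^{-r/(s+r)}\rceil$; this only changes the constants.

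\textbf{Main obstacle.} There is no substantive obstacle; the argument is bookkeeping. The only point needing care is that (A3) for the sharp projection holds uniformly in $r$. This is because the frequencies satisfy $\ell\le n$, so $(1+\lambda_\ell)^{q+r}\lesssim n^{2(q+r)}$ with a constant independent of $n$.
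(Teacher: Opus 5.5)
Your proposal is correct and follows the paper's route: verify (A2) and (A3) for $\mathcal P_n$ via the two propositions of this subsection, apply Theorem~\ref{thm:main} with $\mathcal F_n=\mathcal L_n$, and substitute the balancing choice \eqref{equ:optimalchoice}. Your additional checks are refinements the paper leaves implicit: that $G_n(x,\cdot)\in\Pi_n$ justifies \eqref{equ:identity}, and that rounding $n$ to an integer only changes constants since $\delta_m\le\pi$.
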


\begin{remark}
We emphasize, however, that the sharp spectral projection kernel
does not possess the strong spatial localization properties \eqref{equ:localization} enjoyed by smooth
spectral multipliers. Consequently, one cannot in general expect
uniform \(L^\infty\)-stability analogous to Theorem \ref{thm:uniformstability}.
Indeed, the corresponding Lebesgue constants typically grow with \(n\),
reflecting the poor localization of the sharp spectral projector; see, e.g., \cite{MR1761902,zbMATH01421286}.
\end{remark}

\subsection{Compactly supported smooth filters}

We then consider the classical filtered hyperinterpolation setting in \cite{MR2875103,sloan2012filtered}.
Let
\[
h\in C^\kappa([0,\infty)),
\qquad
\kappa>d+1,
\]
satisfy
\begin{equation*}
h(t)=\begin{cases}
1, & 0\le t\le1,\\
0, & t\ge 2.
\end{cases}
\end{equation*}
Define
\begin{equation}\label{equ:filteredkernel}
\eta_n(\lambda_\ell)=h(\ell/n).
\end{equation}
Then
\begin{equation}\label{equ:filteredhyperinterpolation}
\mathcal T_n f
=
\sum_{\ell=0}^\infty
h(\ell/n)
\sum_{k=1}^{Z(d,\ell)}
\widehat f_{\ell,k}Y_{\ell,k}.
\end{equation}
The multiplier is compactly supported in the sense that $h(\ell/n)=0$ whenever
$\ell\ge 2n$.
Thus $\mathcal T_n f$ is a spherical polynomial of degree at most $2n$.

We first verify the approximation property.
\begin{proposition}
Let $0\le q<s$ and $f\in H^s(\mathbb{S}^d)$. For the filtered kernel \eqref{equ:filteredkernel} and the associated spectral approximation operator \eqref{equ:filteredhyperinterpolation}, we have
\[
\|f-\mathcal T_n f\|_{H^q}
\lesssim n^{-(s-q)}
\|f\|_{H^s}.\]
\end{proposition}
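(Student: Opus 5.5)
The argument works coefficientwise in the spherical harmonic basis, as in the sharp projection case. Since $\mathcal T_n$ is diagonal in the basis $\{Y_{\ell,k}\}$, we have
\[
f-\mathcal T_n f=\sum_{\ell=0}^\infty \bigl(1-h(\ell/n)\bigr)\sum_{k=1}^{Z(d,\ell)}\widehat f_{\ell,k}Y_{\ell,k},
\]
and therefore
\[
\|f-\mathcal T_n f\|_{H^q}^2=\sum_{\ell=0}^\infty |1-h(\ell/n)|^2(1+\lambda_\ell)^q\sum_{k=1}^{Z(d,\ell)}|\widehat f_{\ell,k}|^2 .
\]
Because $h(t)=1$ for $0\le t\le 1$, every term with $\ell\le n$ vanishes, so only frequencies $\ell>n$ contribute.

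For $\ell>n$, the factor $|1-h(\ell/n)|$ is bounded uniformly by $1+\|h\|_{L^\infty}$. This is finite because $h$ is continuous on $[0,2]$ and vanishes on $[2,\infty)$. The constant depends only on $h$, not on $n$, $m$ or $f$.

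Next, $\lambda_\ell=\ell(\ell+d-1)\sim \ell^2$ and $s-q>0$. Hence for $\ell>n$ we get $(1+\lambda_\ell)^{q-s}\lesssim \ell^{-2(s-q)}\le n^{-2(s-q)}$, which gives
\[
(1+\lambda_\ell)^q\lesssim n^{-2(s-q)}(1+\lambda_\ell)^s .
\]
Inserting this into the tail sum and extending the sum over $\ell>n$ to all $\ell\ge0$ yields $\|f-\mathcal T_n f\|_{H^q}^2\lesssim n^{-2(s-q)}\|f\|_{H^s}^2$. Taking square roots gives the claim.

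No step is a real obstacle; the argument is essentially the one for $\mathcal P_n$. The only point that needs care is the uniform bound on $|1-h|$. The smoothness $\kappa>d+1$ is not used here; it will only be needed for the localization estimate \eqref{equ:localization}.
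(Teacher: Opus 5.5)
Your proof is correct and matches the paper's argument: you kill the frequencies $\ell\le n$ using $h\equiv 1$ on $[0,1]$, bound $|1-h(\ell/n)|$ uniformly, and apply $(1+\lambda_\ell)^q\lesssim n^{-2(s-q)}(1+\lambda_\ell)^s$ for $\ell>n$. Your justification that $h$ is bounded and your remark that $\kappa>d+1$ plays no role here are both accurate.
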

\begin{proof}
Since $h(\ell/n)=1$ for $\ell\le n$,
we have
\[
f-\mathcal T_n f
=
\sum_{\ell>n}
\left(1-h(\ell/n)\right)
\sum_{k=1}^{Z(d,\ell)}
\widehat f_{\ell,k}Y_{\ell,k}.
\]
Therefore,
\[
\begin{aligned}
\|f-\mathcal T_n f\|_{H^q}^2
&=
\sum_{\ell>n}
(1+\lambda_\ell)^q
|1-h(\ell/n)|^2
\sum_{k=1}^{Z(d,\ell)}
|\widehat f_{\ell,k}|^2 .
\end{aligned}
\]
Since $h$ is bounded, $\lambda_\ell\sim \ell^2$, and for $\ell>n$ there holds
\[
(1+\lambda_\ell)^q
\lesssim n^{-2(s-q)}
(1+\lambda_\ell)^s,
\]
we have the desired estimate.
\end{proof}

We next verify the smoothing property. 
\begin{proposition}
Let $r>0$ and $q\geq 0$. For the filtered kernel \eqref{equ:filteredkernel} and the associated spectral approximation operator \eqref{equ:filteredhyperinterpolation}, we have
\[
\|\mathcal T_n u\|_{H^q}
\lesssim n^{r+q}
\|u\|_{H^{-r}}.
\]
\end{proposition}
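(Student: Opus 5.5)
The plan follows the sharp-projection smoothing estimate proved above, with one extra ingredient: the filter $h$ is bounded. I will work entirely on the Fourier side. By the definition \eqref{equ:filteredhyperinterpolation} and orthonormality of $\{Y_{\ell,k}\}$, for any $u\in H^{-r}(\mathbb S^d)$ we have
\[
\|\mathcal T_n u\|_{H^q}^2
=
\sum_{\ell=0}^\infty
(1+\lambda_\ell)^q\,|h(\ell/n)|^2
\sum_{k=1}^{Z(d,\ell)}|\widehat u_{\ell,k}|^2 .
\]
Because $h(t)=0$ for $t\ge2$, only the indices $\ell<2n$ contribute. On those indices, $|h(\ell/n)|\le\|h\|_{L^\infty}$, which is finite because $h$ is continuous and compactly supported.

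The key pointwise inequality is that for $\ell<2n$,
\[
(1+\lambda_\ell)^q
=
(1+\lambda_\ell)^{q+r}(1+\lambda_\ell)^{-r}
\lesssim
n^{2(q+r)}(1+\lambda_\ell)^{-r}.
\]
This uses $\lambda_\ell=\ell(\ell+d-1)\sim\ell^2$, hence $1+\lambda_\ell\lesssim(1+\ell)^2\lesssim n^2$ for $\ell<2n$ and $n\ge1$. It also uses $q+r\ge0$, which holds since $r>0$ and $q\ge0$. The implied constant depends only on $d$, $q$, $r$.

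Inserting this inequality into the sum and extending the sum back over all $\ell$ gives
\[
\|\mathcal T_n u\|_{H^q}^2
\lesssim
\|h\|_{L^\infty}^2\, n^{2(r+q)}\sum_{\ell}(1+\lambda_\ell)^{-r}\sum_k|\widehat u_{\ell,k}|^2
=
\|h\|_{L^\infty}^2\, n^{2(r+q)}\|u\|_{H^{-r}}^2 .
\]
Taking square roots yields the claim.

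I expect no real obstacle here. The only points needing care are the following:
\begin{itemize}
\item identifying the $H^{-r}$ dual norm with the weighted coefficient norm $\sum_\ell(1+\lambda_\ell)^{-r}\sum_k|\widehat u_{\ell,k}|^2$, which is standard via the $L^2$ pairing;
\item noting that $\widehat u_{\ell,k}=\langle u,Y_{\ell,k}\rangle$ is well defined for distributions;
\item tracking the harmless factor $2$ in the support bound $\ell<2n$, which only changes the constant.
\end{itemize}
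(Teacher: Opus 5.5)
Your proposal is correct and follows essentially the same route as the paper. Both arguments restrict the coefficient sum to $\ell\le 2n$ using the support of $h$, apply the pointwise bound $(1+\lambda_\ell)^q\lesssim n^{2(r+q)}(1+\lambda_\ell)^{-r}$ there, and read off the $H^{-r}$ coefficient norm; you also state explicitly the bound $|h(\ell/n)|\le\|h\|_{L^\infty}$, which the paper uses only implicitly.
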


\begin{proof}
Let $u\in H^{-r}(\mathbb S^d)$.
Since \(\mathcal T_n u\) is supported in frequencies \(\ell\le 2n\), we have
\[
\begin{aligned}
\|\mathcal T_n u\|_{H^q}^2
&=
\sum_{\ell\le 2n}
(1+\lambda_\ell)^q
|h(\ell/n)|^2
\sum_{k=1}^{Z(d,\ell)}
|\widehat u_{\ell,k}|^2 .
\end{aligned}
\]
For \(\ell\le 2n\),
\[
(1+\lambda_\ell)^q
\lesssim n^{2(r+q)}
(1+\lambda_\ell)^{-r}.
\]
Therefore,
\[
\|\mathcal T_n u\|_{H^q}^2
\lesssim n^{2(r+q)}
\sum_{\ell\le 2n}
(1+\lambda_\ell)^{-r}
\sum_{k=1}^{Z(d,\ell)}
|\widehat u_{\ell,k}|^2 ,
\]
and hence we obtain the smoothing estimate.
\end{proof}

We finally comment on the localization estimate \eqref{equ:localization}.
If the filter $h$ is compactly supported and possesses $\kappa$ continuous derivatives,
then the associated kernel satisfies a standard localization estimate of the form
\[
|\Phi_n(x,y)| \le C_L n^d (1+n\rho(x,y))^{-L}
\]
for every $L<\kappa$; see \cite{MR2673702,sloan2012filtered}.

Thus compactly supported smooth filters satisfy all assumptions in Section \ref{sec:main_results}. We obtain the following consequence.
\begin{corollary}[Compactly supported filtered hyperinterpolation]
Let
\[r>d/2,\qquad 
s>d/2 +r,\qquad
0\le q<s,\qquad
\kappa>d+1,\]
 and assume that the sampling measure satisfies (A1). Then
\[
\|f-\mathcal F_n f\|_{H^q}
\lesssim
\left(
n^{-(s-q)}
+
\delta_m^r n^{r+q}
\right)
\|f\|_{H^s}.
\]
The choice \eqref{equ:optimalchoice} yields
\[
\|f-\mathcal F_n f\|_{H^q}
\lesssim
\delta_m^{\frac{r(s-q)}{s+r}}
\|f\|_{H^s}.
\]
Moreover, under the conditions of Theorem \ref{thm:uniformstability}, the operator $\mathcal{F}_n$ has uniform stability in the sense of \eqref{equ:uniformstability}.
\end{corollary}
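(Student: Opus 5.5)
The corollary is an assembly of results already proved; no new estimate is needed. The plan is to check the three hypotheses of Theorem~\ref{thm:main} for the multiplier $\eta_n(\lambda_\ell)=h(\ell/n)$ from \eqref{equ:filteredkernel}, then derive the balanced rate, and finally check the hypotheses of Theorem~\ref{thm:uniformstability}.

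\textbf{Step 1 (Theorem~\ref{thm:main}).}
\begin{itemize}
\item (A1) is assumed directly.
\item (A2) is the first proposition of this subsection (approximation property for the filtered operator). It holds for every $0\le q<s$ and uses only that $h\equiv 1$ on $[0,1]$ and that $h$ is bounded.
\item (A3) is the second proposition (smoothing property). It holds for every $r>0$ and $q\ge 0$, because the multiplier vanishes for $\ell\ge 2n$ and $\lambda_\ell\lesssim n^2$ on the remaining frequencies.
\end{itemize}
Since $r>d/2$ and $s>d/2+r$ are imposed, Lemma~\ref{lem:estimate} applies inside the proof of Theorem~\ref{thm:main}. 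That theorem then gives the first displayed bound verbatim.

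\textbf{Step 2 (balanced rate).} Substitute $n\sim\delta_m^{-r/(s+r)}$ from \eqref{equ:optimalchoice}. This gives
\[
n^{-(s-q)}\sim\delta_m^{r(s-q)/(s+r)}
\]
and
\[
\delta_m^r n^{r+q}\sim\delta_m^{\,r-r(r+q)/(s+r)}=\delta_m^{r(s-q)/(s+r)},
\]
so both terms have the same order. This is exactly the Balanced approximation rate corollary.

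\textbf{Step 3 (uniform stability).} Theorem~\ref{thm:uniformstability} requires the localization estimate \eqref{equ:localization} with some $L>d+1$. The cited localization result for $C^\kappa$ compactly supported filters gives \eqref{equ:localization} for every $L<\kappa$. Because $\kappa>d+1$, the interval $(d+1,\kappa)$ is nonempty, so I would fix any $L$ in it. The remaining hypotheses of Theorem~\ref{thm:uniformstability} are assumed in the corollary:
\begin{itemize}
\item quasi-uniformity of the sampling sets;
\item $w_j\lesssim\delta_m^d$;
\item $n\lesssim\delta_m^{-1}$.
\end{itemize}
Under these, \eqref{equ:uniformstability} follows. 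The remark after Theorem~\ref{thm:uniformstability} shows that the balanced choice of $n$ is compatible with the restriction $n\lesssim\delta_m^{-1}$.

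\textbf{Main obstacle.} The only delicate point is the strictness in the localization exponent. It must be confirmed that the cited kernel estimate from \cite{MR2673702,sloan2012filtered} holds for our normalization of $h$ and for $C^\kappa$ regularity with noninteger excess. If that reference only gives $L=\kappa$ for integer $\kappa$ under stronger smoothness, I would instead take $L=\lfloor\kappa\rfloor$ or argue via summation by parts $\lceil d+1\rceil$ times. Either way we still need $L>d+1$. Everything else is direct citation.
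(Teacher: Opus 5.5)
Your proposal is correct and matches the paper's own (implicit) argument. The two propositions of the subsection supply (A2) and (A3), Theorem~\ref{thm:main} together with the balancing choice \eqref{equ:optimalchoice} gives both rates, and the cited localization bound with any $L\in(d+1,\kappa)$ feeds Theorem~\ref{thm:uniformstability}. Only your fallback in the last paragraph is off: taking $L=\lfloor\kappa\rfloor$ would fail for non-integer $\kappa\in(d+1,d+2)$. It is not needed, though, since you rely, exactly as the paper does, on the cited estimate for every $L<\kappa$.
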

\subsection{Bessel potential multipliers}

We next consider Bessel potential multipliers. Let $\beta>0$
and define
\begin{equation}\label{equ:besselkernel}
\eta_n(\lambda_\ell)
=
\left(1+n^{-2}\lambda_\ell\right)^{-\beta/2}.
\end{equation}
The corresponding operator is
\begin{equation}\label{equ:besseloperator}
\mathcal T_n
=
\left(I-n^{-2}\Delta_{\mathbb S^d}\right)^{-\beta/2}.
\end{equation}

We first verify the approximation property. 

\begin{proposition}
Let $0\le s-q\le2$ and $f\in H^s(\mathbb{S}^d)$. For the Bessel multiplier \eqref{equ:besselkernel} and the associated spectral approximation operator \eqref{equ:besseloperator}, we have
\[
\|f-\mathcal T_n f\|_{H^q}
\lesssim n^{-(s-q)}
\|f\|_{H^s}.\]
\end{proposition}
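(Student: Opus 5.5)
We will argue exactly as for the sharp and filtered cases, via the spectral representation. Since $\mathcal T_n$ is diagonal in the spherical harmonic basis,
\[
\|f-\mathcal T_n f\|_{H^q}^2
=
\sum_{\ell=0}^\infty
(1+\lambda_\ell)^q
\bigl|1-\eta_n(\lambda_\ell)\bigr|^2
\sum_{k=1}^{Z(d,\ell)}
|\widehat f_{\ell,k}|^2 ,
\]
with $\eta_n(\lambda_\ell)=(1+t)^{-\beta/2}$, where $t:=n^{-2}\lambda_\ell$. It therefore suffices to prove the pointwise multiplier bound
\[
(1+\lambda_\ell)^q\,\bigl|1-(1+t)^{-\beta/2}\bigr|^2
\lesssim
n^{-2(s-q)}(1+\lambda_\ell)^s
\qquad\text{for all }\ell\ge0 .
\]
Writing $\alpha:=s-q\in[0,2]$, this is equivalent to
\[
\bigl|1-(1+t)^{-\beta/2}\bigr|\lesssim n^{-\alpha}(1+\lambda_\ell)^{\alpha/2}.
\]

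The key scalar step is the elementary inequality
\[
0\le 1-(1+t)^{-\beta/2}\le C_\beta\min(1,t)\le C_\beta\, t^{\alpha/2},
\qquad t\ge0,\ 0\le\alpha\le2 .
\]

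\emph{Lower bound and the bound by $1$:} for $\beta>0$ the map $t\mapsto(1+t)^{-\beta/2}$ is decreasing with values in $(0,1]$.

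\emph{Linear bound:} by the mean value theorem, $1-(1+t)^{-\beta/2}\le \frac{\beta}{2}t$.

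\emph{Interpolation:} $\min(1,t)\le t^{\theta}$ for every $\theta\in[0,1]$. We apply this with $\theta=\alpha/2$, which lies in $[0,1]$ precisely because $0\le s-q\le2$.

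Combining the scalar inequality with $t=n^{-2}\lambda_\ell\le n^{-2}(1+\lambda_\ell)$ gives
\[
\bigl|1-(1+t)^{-\beta/2}\bigr|\lesssim n^{-\alpha}(1+\lambda_\ell)^{\alpha/2},
\]
which is the required multiplier bound. Squaring, multiplying by $(1+\lambda_\ell)^q$, and summing over $\ell$ and $k$ yields $\|f-\mathcal T_n f\|_{H^q}^2\lesssim n^{-2(s-q)}\|f\|_{H^s}^2$. The constant depends only on $\beta$.

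There is no real obstacle. The only delicate point is the restriction $s-q\le2$: the symbol $1-\eta_n$ vanishes only to first order in $t$, i.e. to second order in $\ell/n$, so $\min(1,t)\le t^{\alpha/2}$ fails for $\alpha>2$ when $t$ is small. This saturation explains the hypothesis, and we note that the $\ell=0$ term vanishes trivially.
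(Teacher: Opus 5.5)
Your proposal is correct and follows essentially the same route as the paper: diagonalize $\mathcal T_n$, bound the symbol by $|1-(1+t)^{-\beta/2}|\le C\,t^{\theta}$ with $\theta=(s-q)/2$, and then use $t=n^{-2}\lambda_\ell\le n^{-2}(1+\lambda_\ell)$. The only difference is that you actually prove the elementary inequality (via monotonicity, the mean value theorem, and $\min(1,t)\le t^{\theta}$ for $\theta\in[0,1]$), which the paper simply asserts.
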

\begin{proof}
For $f\in H^s(\mathbb S^d)$,
we have
\[
f-\mathcal T_n f
=
\sum_{\ell=0}^\infty
\left[
1-\left(1+n^{-2}\lambda_\ell\right)^{-\beta/2}
\right]
\sum_{k=1}^{Z(d,\ell)}
\widehat f_{\ell,k}Y_{\ell,k}.
\]
Therefore
\[
\|f-\mathcal T_n f\|_{H^q}^2
=
\sum_{\ell=0}^\infty
(1+\lambda_\ell)^q
\left|
1-\left(1+n^{-2}\lambda_\ell\right)^{-\beta/2}
\right|^2
\sum_{k=1}^{Z(d,\ell)}
|\widehat f_{\ell,k}|^2 .
\]
For \(0\le \theta\le1\), the elementary inequality
\[
\left|1-(1+t)^{-\beta/2}\right|
\le
C_{\beta,\theta} t^\theta,
\qquad t\ge0,
\]
holds, where $C_{\beta,\theta}>0$ is a constant depending on $\beta$ and $\theta$. Taking
\[
\theta=\frac{s-q}{2},
\]
which requires
\[
0\le s-q\le2,
\]
we obtain
\[
\left|
1-\left(1+n^{-2}\lambda_\ell\right)^{-\beta/2}
\right|
\lesssim n^{-(s-q)}\lambda_\ell^{(s-q)/2}.
\]
Consequently,
\[
(1+\lambda_\ell)^q
\left|
1-\left(1+n^{-2}\lambda_\ell\right)^{-\beta/2}
\right|^2
\lesssim n^{-2(s-q)}
(1+\lambda_\ell)^s.
\]
It follows that $\|f-\mathcal T_n f\|_{H^q}
\lesssim n^{-(s-q)}
\|f\|_{H^s}$
for 
$0\le s-q\le2$.
\end{proof}

We then verify the smoothing property. 

\begin{proposition}
Let $r>0$ and $q\geq 0$. For the Bessel multiplier \eqref{equ:besselkernel} and the associated  spectral approximation operator \eqref{equ:besseloperator}, if $\beta\ge r+q$, then we have
\[
\|\mathcal T_n u\|_{H^q}
\lesssim n^{r+q}
\|u\|_{H^{-r}}.
\]
\end{proposition}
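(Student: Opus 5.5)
The plan is the same spectral-side computation used for the sharp projection and the compactly supported filter. The only new ingredient is that the multiplier now has infinite support. The decay of $(1+n^{-2}\lambda_\ell)^{-\beta/2}$ must therefore do the work that the frequency cutoff did before.

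Expanding $u\in H^{-r}(\mathbb S^d)$ in spherical harmonics gives
\[
\|\mathcal T_n u\|_{H^q}^2
=
\sum_{\ell=0}^\infty
(1+\lambda_\ell)^{q}\left(1+n^{-2}\lambda_\ell\right)^{-\beta}
\sum_{k=1}^{Z(d,\ell)}|\widehat u_{\ell,k}|^2 .
\]
It therefore suffices to prove the uniform symbol bound
\[
M_\ell:=(1+\lambda_\ell)^{q+r}\left(1+n^{-2}\lambda_\ell\right)^{-\beta}\lesssim n^{2(r+q)},
\]
with a constant independent of $\ell$ and $n\ge1$. Given this, the display above is bounded by $n^{2(r+q)}\sum_\ell(1+\lambda_\ell)^{-r}\sum_k|\widehat u_{\ell,k}|^2=n^{2(r+q)}\|u\|_{H^{-r}}^2$, and taking square roots gives the claim.

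To prove the symbol bound, I would use the elementary inequality
\[
1+\lambda\le n^2\left(1+n^{-2}\lambda\right), \qquad n\ge1,\ \lambda\ge0,
\]
which holds since $n^2(1+n^{-2}\lambda)=n^2+\lambda$. Raising it to the power $q+r\ge0$ gives
\[
M_\ell\le n^{2(q+r)}\left(1+n^{-2}\lambda_\ell\right)^{q+r-\beta}.
\]
The hypothesis $\beta\ge r+q$ makes the exponent $q+r-\beta$ nonpositive. Since $1+n^{-2}\lambda_\ell\ge1$, the last factor is at most $1$, so $M_\ell\le n^{2(q+r)}$ uniformly in $\ell$.

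The only point needing care is this uniform control of the symbol over all frequencies, since there is no truncation at $\ell\lesssim n$. That is exactly where $\beta\ge r+q$ enters. In the low-frequency regime $\lambda_\ell\le n^2$ the bound holds for any $\beta$. In the high-frequency regime the growth $(1+\lambda_\ell)^{q+r}$ must be absorbed by the decay $(n^{-2}\lambda_\ell)^{-\beta}$, which requires $\beta\ge q+r$. The single inequality above handles both regimes at once, so no case split is needed.
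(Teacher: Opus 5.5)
Your proposal is correct and follows essentially the same route as the paper: reduce to the uniform symbol bound $(1+\lambda_\ell)^{r+q}(1+n^{-2}\lambda_\ell)^{-\beta}\lesssim n^{2(r+q)}$, then use $1+\lambda_\ell\le n^2(1+n^{-2}\lambda_\ell)$ for $n\ge1$ together with $\beta\ge r+q$ and $1+n^{-2}\lambda_\ell\ge 1$. The paper writes this last step as two separate inequalities, which you merge into one, but the argument is the same.
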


\begin{proof}
Let $u\in H^{-r}(\mathbb S^d)$.
Then
\[
\|\mathcal T_n u\|_{H^q}^2
=
\sum_{\ell=0}^\infty
(1+\lambda_\ell)^q
\left(1+n^{-2}\lambda_\ell\right)^{-\beta}
\sum_{k=1}^{Z(d,\ell)}
|\widehat u_{\ell,k}|^2 .
\]
We claim that, provided
$\beta\ge r+q$,
one has
\[
(1+\lambda_\ell)^q
\left(1+n^{-2}\lambda_\ell\right)^{-\beta}
\lesssim n^{2(r+q)}
(1+\lambda_\ell)^{-r}.
\]
Indeed, this is equivalent to
\[
(1+\lambda_\ell)^{r+q}
\left(1+n^{-2}\lambda_\ell\right)^{-\beta}
\lesssim n^{2(r+q)}.
\]
Since
\[
1+n^{-2}\lambda_\ell
\ge
n^{-2}(1+\lambda_\ell)
\]
for \(n\ge1\), we have
\[
\left(1+n^{-2}\lambda_\ell\right)^{-(r+q)}
\lesssim n^{2(r+q)}
(1+\lambda_\ell)^{-(r+q)}.
\]
If \(\beta\ge r+q\), then
\[
\left(1+n^{-2}\lambda_\ell\right)^{-\beta}
\le
\left(1+n^{-2}\lambda_\ell\right)^{-(r+q)}.
\]
Combining these estimates gives the claim. Hence
$\|\mathcal T_n u\|_{H^q}
\lesssim n^{r+q}
\|u\|_{H^{-r}}$.
\end{proof}

Finally, the Bessel multiplier \eqref{equ:besselkernel}
is a smooth spectral multiplier whose derivatives satisfy
polynomial decay estimates depending on \(\beta\).
Standard localization results (e.g. \cite{MR2673702}) for smooth spectral multipliers imply that, if \(\beta\) is sufficiently large, then the associated kernel satisfies \eqref{equ:localization} for some \(L>d+1\).

We therefore obtain the following consequence.

\begin{corollary}[Bessel potential filtered hyperinterpolation]
Let
\[
r>d/2,\qquad
s> d/2+r, \qquad 
0\le q<s,
\qquad
0<s-q\le2.
\]
Assume that
\[
\beta\ge r+q
\]
and that \(\beta\) is sufficiently large so that the associated Bessel kernel satisfies the required localization estimate \eqref{equ:localization} in the sense that $L>d+1$ is admissible. If the sampling measure satisfies (A1),
then
\[
\|f-\mathcal F_n f\|_{H^q}
\lesssim
\left(
n^{-(s-q)}
+
\delta_m^r n^{r+q}
\right)
\|f\|_{H^s}.
\]
The choice \eqref{equ:optimalchoice} yields
\[
\|f-\mathcal F_n f\|_{H^q}
\lesssim
\delta_m^{\frac{r(s-q)}{s+r}}
\|f\|_{H^s}.
\]
Moreover, under the conditions of Theorem \ref{thm:uniformstability}, the operator $\mathcal{F}_n$ has uniform stability in the sense of \eqref{equ:uniformstability}.
\end{corollary}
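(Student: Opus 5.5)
The corollary follows by checking the hypotheses (A1)--(A3) of Theorem~\ref{thm:main} for the Bessel multiplier \eqref{equ:besselkernel}. Once that is done, the stability part follows from Theorem~\ref{thm:uniformstability}.

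The hypotheses are verified as follows.

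\textbf{Step 1: (A1).} This is assumed directly in the statement.

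\textbf{Step 2: (A2).} Since $0<s-q\le 2$, the first proposition of this subsection applies. It gives
\[
\|f-\mathcal T_n f\|_{H^q}\lesssim n^{-(s-q)}\|f\|_{H^s}.
\]
That proposition chooses $\theta=(s-q)/2\in(0,1]$ in the elementary inequality
\[
|1-(1+t)^{-\beta/2}|\lesssim t^\theta .
\]
It then uses $\lambda_\ell^{(s-q)/2}(1+\lambda_\ell)^q\le(1+\lambda_\ell)^s$, so nothing beyond that proposition is needed.

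\textbf{Step 3: (A3).} Since $\beta\ge r+q$, the smoothing proposition for the Bessel multiplier gives
\[
\|\mathcal T_n u\|_{H^q}\lesssim n^{r+q}\|u\|_{H^{-r}}
\]
for all $u\in H^{-r}(\mathbb S^d)$. We take $n\ge 1$, which that proof uses in the bound $1+n^{-2}\lambda_\ell\ge n^{-2}(1+\lambda_\ell)$.

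\textbf{Step 4: the error bound.} The remaining conditions of Theorem~\ref{thm:main} are $r>d/2$, $s>d/2+r$ and $0\le q<s$, and all are part of the hypotheses. The theorem therefore yields
\[
\|f-\mathcal F_n f\|_{H^q}\lesssim\bigl(n^{-(s-q)}+\delta_m^r n^{r+q}\bigr)\|f\|_{H^s}.
\]

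\textbf{Step 5: the balanced rate.} With $n\sim\delta_m^{-r/(s+r)}$ from \eqref{equ:optimalchoice}, each term equals $\delta_m^{r(s-q)/(s+r)}$. For the second term this is the computation
\[
r-\frac{r(r+q)}{s+r}=\frac{r(s-q)}{s+r}.
\]
This is the balanced-rate corollary. One point should be noted: (A2) holds only for $s-q\le 2$, so the rate is limited by the saturation of the Bessel approximation. The restriction $0<s-q\le 2$ in the hypotheses is exactly what keeps the argument consistent.

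\textbf{Step 6: uniform stability.} This is the only step that relies on input from outside the paper's proofs. We need the kernel localization \eqref{equ:localization} with some $L>d+1$, and the statement assumes $\beta$ is large enough for this to hold, citing the standard smooth-multiplier localization results \cite{MR2673702}.

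Justifying that assumption is the main obstacle. The function $t\mapsto(1+t^2)^{-\beta/2}$ is smooth, and its $k$-th derivative decays like $t^{-\beta-k}$. For the integration-by-parts (or Littlewood--Paley summation) argument, I would split it dyadically into smooth pieces supported on $t\sim 2^j$, each with amplitude $2^{-j\beta}$. Each piece gives a kernel bounded by
\[
(2^jn)^d\,2^{-j\beta}\,(1+2^jn\rho)^{-L'}
\]
for any $L'$. The sum over $j$ converges, and is bounded by $n^d(1+n\rho)^{-L}$, provided $\beta>d$ is large relative to $L$; in particular $\beta>2d+1$ would suffice. I would present this only as a justification of the hypothesis.

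Given localization, quasi-uniformity, $w_j\lesssim\delta_m^d$, and $n\lesssim\delta_m^{-1}$, Theorem~\ref{thm:uniformstability} gives
\[
\|\mathcal F_n f\|_{L^\infty}\le C\|f\|_{L^\infty}.
\]
The remark following that theorem shows that the balanced choice of $n$ satisfies $n\lesssim\delta_m^{-1}$ automatically.
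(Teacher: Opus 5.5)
Your proposal is correct and follows the paper's own route. (A2) and (A3) are exactly the two Bessel propositions (under $0<s-q\le 2$ and $\beta\ge r+q$), Theorem~\ref{thm:main} with the balancing choice \eqref{equ:optimalchoice} gives both error bounds, and uniform stability is Theorem~\ref{thm:uniformstability} applied under the assumed localization. Your Step~6 sketch is not needed, since localization is a hypothesis of the corollary and the paper simply cites \cite{MR2673702}; note also that in your dyadic sum the bound $(1+2^jn\rho)^{-L'}\le(1+n\rho)^{-L'}$ shows that $\beta>d$ alone gives localization of every order, so $\beta>2d+1$ is more than you need.
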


\begin{remark}
The restriction
\[
s-q\le 2
\]
is intrinsic to the Bessel-type multiplier
\[(1+n^{-2}\lambda)^{-\beta/2}.\]
Although increasing \(\beta\) improves smoothing and kernel localization,
the multiplier still satisfies
\[
1-(1+n^{-2}\lambda)^{-\beta/2}
=
O(n^{-2}\lambda)
,\qquad \lambda\to0.
\]
Thus the leading consistency error remains of order \(n^{-2}\),
and no higher-order approximation can be achieved solely by increasing \(\beta\).
\end{remark}

\subsection{Heat kernel multipliers}

We finally consider heat kernel multipliers. Define
\begin{equation}\label{equ:heatkernel}
\eta_n(\lambda_\ell)
=
e^{-\lambda_\ell/n^2}.
\end{equation}
The corresponding operator is the heat semigroup
\begin{equation}\label{equ:heatoperator}
\mathcal T_n
=
e^{n^{-2}\Delta_{\mathbb S^d}}.
\end{equation}

We first verify the approximation property. 
\begin{proposition}
Let $0\le s-q\le2$ and $f\in H^s(\mathbb{S}^d)$. For the heat multiplier \eqref{equ:heatkernel} and the associated spectral approximation operator \eqref{equ:heatoperator}, we have
\[
\|f-\mathcal T_n f\|_{H^q}
\lesssim n^{-(s-q)}
\|f\|_{H^s}.\]
\end{proposition}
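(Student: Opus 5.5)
The proof will follow the Bessel potential case: write everything in the spherical harmonic basis and reduce the claim to a pointwise bound on the multiplier symbol. Since $\mathcal T_n Y_{\ell,k}=e^{-\lambda_\ell/n^2}Y_{\ell,k}$, we have
\[
f-\mathcal T_n f
=
\sum_{\ell=0}^\infty
\bigl(1-e^{-\lambda_\ell/n^2}\bigr)
\sum_{k=1}^{Z(d,\ell)}
\widehat f_{\ell,k}Y_{\ell,k},
\]
and therefore
\[
\|f-\mathcal T_n f\|_{H^q}^2
=
\sum_{\ell=0}^\infty
(1+\lambda_\ell)^q
\bigl|1-e^{-\lambda_\ell/n^2}\bigr|^2
\sum_{k=1}^{Z(d,\ell)}
|\widehat f_{\ell,k}|^2 .
\]
It then suffices to prove, uniformly in $\ell$ and $n$,
\[
(1+\lambda_\ell)^q\bigl|1-e^{-\lambda_\ell/n^2}\bigr|^2\lesssim n^{-2(s-q)}(1+\lambda_\ell)^s.
\]
Summing this against $\sum_k|\widehat f_{\ell,k}|^2$ and taking square roots gives the result.

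The key step is the elementary inequality
\[
0\le 1-e^{-t}\le t^\theta,\qquad t\ge0,\ 0\le\theta\le1.
\]
To prove it, split into two cases. For $t\le1$, use $1-e^{-t}\le t\le t^\theta$. For $t\ge1$, use $1-e^{-t}\le1\le t^\theta$. This inequality is the only place where the restriction on $s-q$ enters, and it is where I expect the main (though mild) obstacle to lie.

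Choose $\theta=(s-q)/2\in[0,1]$, which is exactly the hypothesis $0\le s-q\le2$, and set $t=\lambda_\ell/n^2$. This gives
\[
\bigl|1-e^{-\lambda_\ell/n^2}\bigr|\le n^{-(s-q)}\lambda_\ell^{(s-q)/2}\le n^{-(s-q)}(1+\lambda_\ell)^{(s-q)/2}.
\]
Squaring and multiplying by $(1+\lambda_\ell)^q$ yields the required symbol bound, with constant $1$. The case $\ell=0$ is trivial, since the symbol vanishes there.

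As in the remark following the Bessel case, $1-e^{-t}=O(t)$ as $t\to0$. So the bound cannot be improved beyond $\theta=1$, which explains the restriction $s-q\le 2$.
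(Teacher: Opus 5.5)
Your proposal is correct and follows the paper's proof essentially step for step: you expand in spherical harmonics, reduce to the symbol bound via $1-e^{-t}\le t^{\theta}$ with $\theta=(s-q)/2$, and conclude. The one addition is that you actually prove the elementary inequality, with constant $1$, by splitting into $t\le1$ and $t\ge1$, whereas the paper only asserts it.
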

\begin{proof}
For $f\in H^s(\mathbb S^d)$, we have
\[
f-\mathcal T_n f
=
\sum_{\ell=0}^\infty
\left(1-e^{-\lambda_\ell/n^2}\right)
\sum_{k=1}^{Z(d,\ell)}
\widehat f_{\ell,k}Y_{\ell,k}.
\]
Thus
\[
\|f-\mathcal T_n f\|_{H^q}^2
=
\sum_{\ell=0}^\infty
(1+\lambda_\ell)^q
\left|1-e^{-\lambda_\ell/n^2}\right|^2
\sum_{k=1}^{Z(d,\ell)}
|\widehat f_{\ell,k}|^2 .
\]
For \(0\le \theta\le1\), the elementary inequality
\[
1-e^{-t}
\le
C t^\theta,
\qquad t\ge0,
\]
holds. Taking
\[
\theta=\frac{s-q}{2},
\]
which requires
\[
0\le s-q\le2,
\]
we obtain
\[
1-e^{-\lambda_\ell/n^2}
\lesssim n^{-(s-q)}
\lambda_\ell^{(s-q)/2}.
\]
Consequently,
\[
(1+\lambda_\ell)^q
\left|1-e^{-\lambda_\ell/n^2}\right|^2
\lesssim n^{-2(s-q)}
(1+\lambda_\ell)^s.
\]
Therefore, $\|f-\mathcal T_n f\|_{H^q}
\lesssim n^{-(s-q)}
\|f\|_{H^s}$ for
$0\le s-q\le2$.
\end{proof}

We next verify the smoothing property.

\begin{proposition}
Let $r>0$ and $q\geq 0$. For the heat multiplier \eqref{equ:heatkernel} and the associated spectral approximation operator \eqref{equ:heatoperator}, we have
\[
\|\mathcal T_n u\|_{H^q}
\lesssim n^{r+q}
\|u\|_{H^{-r}}.
\]
\end{proposition}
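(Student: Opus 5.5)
The plan follows the Bessel smoothing proposition: reduce everything to a uniform bound on the scalar multiplier, frequency by frequency. By Parseval,
\[
\|\mathcal T_n u\|_{H^q}^2
=
\sum_{\ell=0}^\infty
(1+\lambda_\ell)^q
e^{-2\lambda_\ell/n^2}
\sum_{k=1}^{Z(d,\ell)}
|\widehat u_{\ell,k}|^2 .
\]
It therefore suffices to show that, uniformly in \(\ell\) and \(n\ge1\),
\[
(1+\lambda_\ell)^{r+q}e^{-2\lambda_\ell/n^2}\lesssim n^{2(r+q)} .
\]
Indeed, this is equivalent to \((1+\lambda_\ell)^q e^{-2\lambda_\ell/n^2}\lesssim n^{2(r+q)}(1+\lambda_\ell)^{-r}\). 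Summing against \(\sum_k|\widehat u_{\ell,k}|^2\) then gives \(\|\mathcal T_n u\|_{H^q}^2\lesssim n^{2(r+q)}\|u\|_{H^{-r}}^2\), and we take square roots.

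To prove the scalar bound, set \(a:=r+q>0\) and \(t:=\lambda_\ell/n^2\ge0\).

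\begin{itemize}
\item Since \(n\ge1\), we have \(1+\lambda_\ell\le n^2(1+t)\). Hence
\[
(1+\lambda_\ell)^{a}e^{-2\lambda_\ell/n^2}\le n^{2a}(1+t)^a e^{-2t}.
\]
\item The function \(t\mapsto(1+t)^ae^{-2t}\) is continuous on \([0,\infty)\) and tends to \(0\) as \(t\to\infty\). It is therefore bounded by a constant \(C_a\) depending only on \(a\); explicitly, \(C_a\le\max\{1,(a/2)^ae^{2-a}\}\) from the critical point \(1+t=a/2\).
\end{itemize}

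This gives the claim with an implied constant depending only on \(r\) and \(q\).

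No step is genuinely hard. The one point that needs care is the comparison \(1+\lambda_\ell\le n^2(1+\lambda_\ell/n^2)\), which requires \(n\ge1\). Unlike the Bessel case, no condition analogous to \(\beta\ge r+q\) is needed, because exponential decay dominates every polynomial weight.
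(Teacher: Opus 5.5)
Your proof is correct and follows the paper's argument. Parseval reduces the estimate to the scalar bound $(1+\lambda_\ell)^{r+q}e^{-2\lambda_\ell/n^2}\lesssim n^{2(r+q)}$. You justify that bound more carefully than the paper, via $1+\lambda_\ell\le n^2(1+\lambda_\ell/n^2)$ and the explicit maximum of $(1+t)^a e^{-2t}$, whereas the paper simply asserts $a^{r+q}e^{-ca/n^2}\lesssim n^{2(r+q)}$.
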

\begin{proof}
 Let $u\in H^{-r}(\mathbb S^d)$.
Then
\[
\|\mathcal T_n u\|_{H^q}^2
=
\sum_{\ell=0}^\infty
(1+\lambda_\ell)^q
e^{-2\lambda_\ell/n^2}
\sum_{k=1}^{Z(d,\ell)}
|\widehat u_{\ell,k}|^2 .
\]
We claim that
\[
(1+\lambda_\ell)^q e^{-2\lambda_\ell/n^2}
\lesssim n^{2(r+q)}
(1+\lambda_\ell)^{-r}.
\]
Indeed, it is enough to prove
\[
(1+\lambda_\ell)^{r+q}e^{-2\lambda_\ell/n^2}
\lesssim n^{2(r+q)},
\]
which follows from the following estimate
\[
a^{r+q}e^{-ca/n^2}
\lesssim n^{2(r+q)},
\qquad a\ge0.
\]
 Hence $\|\mathcal T_n u\|_{H^q}
\lesssim n^{r+q}
\|u\|_{H^{-r}}$.
\end{proof}

Finally, the heat kernel on the sphere satisfies the Gaussian upper bound
\[
|\Phi_n(x,y)|
\le C n^d \exp(-c n^2\rho(x,y)^2),
\]
which is the standard heat kernel estimate with time parameter \(t=n^{-2}\).
Such Gaussian estimates are among the basic assumptions and consequences
in the generalized heat kernel framework of Filbir and Mhaskar
\cite{MR2673702}. Since
\[
e^{-c u^2}\le C_L(1+u)^{-L}, \qquad u\ge0,
\]
for every \(L>0\), it follows that
\[
|\Phi_n(x,y)|
\le C_L n^d(1+n\rho(x,y))^{-L}.
\]
Thus the localization requirement in Theorem \ref{thm:uniformstability} is satisfied for arbitrary
polynomial decay order.

We therefore obtain the following consequence.

\begin{corollary}[Heat filtered hyperinterpolation]
Let
\[
r>d/2,\qquad
s>d/2+r,\qquad
0\le q<s,
\qquad
0<s-q\le2.
\]
Assume that the sampling measure satisfies (A1).
Then
\[
\|f-\mathcal F_n f\|_{H^q}
\lesssim
\left(
n^{-(s-q)}
+
\delta_m^r n^{r+q}
\right)
\|f\|_{H^s}.
\]
The choice \eqref{equ:optimalchoice} yields
\[
\|f-\mathcal F_n f\|_{H^q}
\lesssim
\delta_m^{\frac{r(s-q)}{s+r}}
\|f\|_{H^s}.
\]
Moreover, under the conditions of Theorem \ref{thm:uniformstability}, the operator $\mathcal{F}_n$ has uniform stability in the sense of \eqref{equ:uniformstability}.
\end{corollary}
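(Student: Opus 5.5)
The corollary follows by checking the hypotheses of Theorem \ref{thm:main} and Theorem \ref{thm:uniformstability} for the heat semigroup \eqref{equ:heatoperator}, and then specializing. All three ingredients have just been established for the heat multiplier \eqref{equ:heatkernel}.

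First, under the hypothesis $0<s-q\le2$, the approximation proposition for the heat multiplier gives (A2), namely $\|f-\mathcal T_nf\|_{H^q}\lesssim n^{-(s-q)}\|f\|_{H^s}$. Second, the smoothing proposition gives (A3) for every $r>0$ and $q\ge0$, with no further restriction. This is where the heat kernel is simpler than the Bessel case, since no condition analogous to $\beta\ge r+q$ is needed. Third, (A1) is assumed, and $r>d/2$ and $s>d/2+r$ are exactly the standing requirements of Theorem \ref{thm:main}. These requirements guarantee, through Lemma \ref{lem:estimate}, that $f\nu_m\in H^{-r}$ and that the identity \eqref{equ:identity} holds. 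Invoking Theorem \ref{thm:main} then yields the bound $\bigl(n^{-(s-q)}+\delta_m^r n^{r+q}\bigr)\|f\|_{H^s}$.

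For the balanced rate, substitute $n\sim\delta_m^{-r/(s+r)}$ from \eqref{equ:optimalchoice}. The first term becomes $\delta_m^{r(s-q)/(s+r)}$. For the second, $\delta_m^r n^{r+q}\sim \delta_m^{\,r-r(r+q)/(s+r)}=\delta_m^{r(s-q)/(s+r)}$. The two terms agree, which gives the stated rate, as in the balanced-rate corollary.

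For uniform stability, use the Gaussian bound on the heat kernel together with $e^{-cu^2}\le C_L(1+u)^{-L}$. This shows that \eqref{equ:localization} holds for every $L$, so in particular for some $L>d+1$. Theorem \ref{thm:uniformstability} then applies verbatim under its quasi-uniformity, weight and $n\lesssim\delta_m^{-1}$ conditions.

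The only point requiring care is the localization step. It relies on the cited Gaussian upper bound with time $t=n^{-2}$, taken from \cite{MR2673702}, and I would accept that bound as given. Everything else is direct bookkeeping.
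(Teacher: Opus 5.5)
Your proposal is correct and follows essentially the same route as the paper. The heat-multiplier propositions supply (A2) (using $0<s-q\le 2$) and (A3) with no restriction on $r,q$; Theorem \ref{thm:main} together with the balancing choice \eqref{equ:optimalchoice} gives the two error bounds; and the Gaussian heat-kernel bound yields \eqref{equ:localization} for every $L$, so Theorem \ref{thm:uniformstability} gives uniform stability.
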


\begin{remark}
A similar saturation phenomenon occurs for the heat-kernel multiplier
\[
\eta_n(\lambda)=e^{-\lambda/n^2}.
\]
Although this multiplier yields Gaussian localization of the associated kernel,
its consistency near the origin is still only of first order in \(n^{-2}\lambda\), since
\[
1-e^{-\lambda/n^2}
=
O(n^{-2}\lambda),
\qquad \lambda\to0.
\]
Consequently, the approximation error saturates at order \(n^{-2}\).
Thus stronger kernel localization does not by itself imply higher approximation order;
the latter is determined by the low-frequency behavior of the multiplier near
\(\lambda=0\).
\end{remark}

\section{Connection with spherical QMC designs}\label{sec:connection_with_spherical_qmc_designs}

The weak cubature condition (A1) considered in this paper may be interpreted as a Sobolev discrepancy estimate for the sampling measure.
Indeed, for
$\nu_m=\mu_m-\sigma$, one has
\begin{equation*}\|\nu_m\|_{H^{-r}(\mathbb S^d)}
=
\sup_{\|g\|_{H^r}\le1}
\left|
\int_{\mathbb S^d}g\,d\sigma
-
\sum_{j=1}^m
w_jg(x_j)
\right|.
\end{equation*}
Thus the weak cubature assumption is precisely a worst-case integration error estimate over the Sobolev space
$H^r(\mathbb S^d)$. This condition is closely related
to the theory of QMC designs, proposed in \cite{MR3246811}.
A sequence of point sets $\{\mathcal{X}_m\}_{m}$
is called a sequence of QMC designs for \(H^r(\mathbb{S}^d)\),
with \(r>d/2\), if there exists a constant \(C>0\) independent of \(m\) such that
\begin{equation}\label{equ:QMC}
\sup_{\|f\|_{H^r}\le1}\left|
\frac1m\sum_{j=1}^m f(x_j)
-
\int_{\mathbb S^d} f\,d\sigma
\right|
\le
C m^{-r/d}.
\end{equation}
In the equal-weight case
\[
w_j=\frac1m,
\]
the discrepancy measure becomes
\[
\nu_m
=
\frac1m\sum_{j=1}^m\delta_{x_j}-\sigma.
\]
The weak cubature condition (A1) can be realized by QMC designs in the sense of
\[
\|\nu_m\|_{H^{-r}}
\le
C m^{-r/d}.
\]

Consequently, Theorem \ref{thm:main} immediately yields the following result.
\begin{corollary}
Let \(r>d/2\), \(0\le q<s\), and let $\{\mathcal{X}_m\}$ be a quasi-uniform sequence of QMC designs for \(H^r(\mathbb{S}^d)\) and $w_j=1/m$. 
Then for the spectral multiplier hyperinterpolation operator \eqref{equ:spectrallyfilteredhyperinterpolation},
there holds
\[
\|f-\mathcal F_n f\|_{H^q}
\lesssim
\left(
n^{-(s-q)}
+
m^{-r/d} n^{r+q}
\right)
\|f\|_{H^s}.
\]
The corresponding balancing choice
\[
n\sim m^{r/(d(s+r))}
\]
yields 
\[
\|f-\mathcal F_n f\|_{H^q}
\lesssim
m^{-\frac{r(s-q)}{d(s+r)}}
\|f\|_{H^s}.
\]
\end{corollary}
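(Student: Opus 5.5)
The plan is to derive the statement from Theorem~\ref{thm:main}, with the QMC design property supplying the weak cubature hypothesis (A1) and the quasi-uniformity giving the relation between $m^{-r/d}$ and the mesh norm $\delta_m$. The corollary is stated for a general spectral multiplier hyperinterpolation operator. I will therefore read it as implicitly carrying the remaining hypotheses of Theorem~\ref{thm:main}: $f\in H^s(\mathbb S^d)$ with $s>d/2+r$, and (A2)--(A3) for $\mathcal T_n$, as verified in Section~\ref{sec:examples_of_spectrally_localized_kernels} for the four model multipliers.

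\textbf{Step 1: relate $m^{-r/d}$ to $\delta_m$.} With $w_j=1/m$, the QMC property \eqref{equ:QMC} is, by the duality formula for $\|\nu_m\|_{H^{-r}}$, exactly $\|\nu_m\|_{H^{-r}}\le Cm^{-r/d}$. For a quasi-uniform family, the geodesic balls $B(x_j,\gamma_m)$ are disjoint, and each has measure $\sim\gamma_m^d$. Hence $m\gamma_m^d\lesssim1$. Conversely, the balls $B(x_j,\delta_m)$ cover $\mathbb S^d$, so $m\delta_m^d\gtrsim1$. Combined with $\delta_m\le C\gamma_m$, this gives $\delta_m\sim m^{-1/d}$. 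Therefore $\|\nu_m\|_{H^{-r}}\lesssim m^{-r/d}\sim\delta_m^r$, which is (A1).

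\textbf{Step 2: apply Theorem~\ref{thm:main}.} This immediately gives
\[
\|f-\mathcal F_n f\|_{H^q}\lesssim\left(n^{-(s-q)}+\delta_m^r n^{r+q}\right)\|f\|_{H^s}.
\]
One could also bypass $\delta_m$ entirely. Rerunning the proof of Theorem~\ref{thm:main}, I would insert $\|\nu_m\|_{H^{-r}}\lesssim m^{-r/d}$ directly after Lemma~\ref{lem:estimate}; this yields the bound with $m^{-r/d}n^{r+q}$ without using quasi-uniformity at all. Either route gives the first display of the corollary.

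\textbf{Step 3: balance the two terms.} Set $n^{-(s-q)}=m^{-r/d}n^{r+q}$, i.e.\ $n^{s+r}=m^{r/d}$, so $n\sim m^{r/(d(s+r))}$. Substituting, both terms become $m^{-r(s-q)/(d(s+r))}$, which is the second display. Since $n$ must be an integer, I take $n=\lceil m^{r/(d(s+r))}\rceil$; this changes the terms only by constant factors.

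The only non-routine point is Step 1, namely the equivalence $\delta_m\sim m^{-1/d}$ under quasi-uniformity. It rests on a volume-counting argument for disjoint and covering balls, using $\sigma(B(x,t))\sim t^d$ for $0<t\le\pi$. The direct route in Step 2 avoids this entirely, so there is no real obstacle beyond making the implicit assumptions of Theorem~\ref{thm:main} explicit.
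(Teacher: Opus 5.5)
Your proposal is correct and is essentially the paper's argument. The QMC condition \eqref{equ:QMC} with $w_j=1/m$ is exactly $\|\nu_m\|_{H^{-r}}\le Cm^{-r/d}$; this is fed into Theorem~\ref{thm:main}, followed by the same balancing. You make explicit two things the paper leaves implicit: the hypotheses $s>d/2+r$, (A2), (A3), and the quasi-uniformity equivalence $\delta_m\sim m^{-1/d}$.

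Your side remark is also right. Inserting $\|\nu_m\|_{H^{-r}}\lesssim m^{-r/d}$ directly into the proof of Theorem~\ref{thm:main} shows that quasi-uniformity is not actually needed for this corollary.
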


\begin{remark}
For the sharp spectral projection, the approximation rates obtained here with $q=0$
are essentially consistent with those in our previous work
\cite[Section 4]{an2024bypassing},
where hyperinterpolation based on QMC designs was analyzed primarily in the \(L^2\) setting.
\end{remark}

\begin{corollary}
Assume the kernel \(\Phi_n\) satisfies \eqref{equ:localization}
for some \(L>d+1\).
Let \(r>d/2\), and 
let $\{\mathcal{X}_m\}$ be a quasi-uniform sequence of QMC designs for \(H^r(\mathbb{S}^d)\) and $w_j=1/m$. Assume additionally that
$n\lesssim m^{1/d}$.
Then the corresponding spectral multiplier hyperinterpolation operator \eqref{equ:spectrallyfilteredhyperinterpolation}
satisfies
\[
\|\mathcal F_n f\|_{L^\infty}
\le
C
\|f\|_{L^\infty}.
\]
\end{corollary}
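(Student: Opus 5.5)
The plan is to reduce the statement to Theorem~\ref{thm:uniformstability}. Three things must be checked: the sampling sets are quasi-uniform, the weights satisfy $0<w_j\lesssim\delta_m^d$, and the constraint $n\lesssim m^{1/d}$ implies $n\lesssim\delta_m^{-1}$. Once these hold, Theorem~\ref{thm:uniformstability} gives $\|\mathcal F_nf\|_{L^\infty}\le C\|f\|_{L^\infty}$ uniformly in $m$ and $n$. The kernel localization \eqref{equ:localization} with $L>d+1$ is assumed directly.

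\emph{Step 1: $\delta_m\sim m^{-1/d}$.} I will use a volume comparison on $\mathbb S^d$. The geodesic balls $B(x_j,\gamma_m)$ are pairwise disjoint, and each has normalized measure $\gtrsim\gamma_m^d$ (for $\gamma_m\le\pi$). Hence $m\gamma_m^d\lesssim1$, that is, $\gamma_m\lesssim m^{-1/d}$. On the other hand, the balls $B(x_j,\delta_m)$ cover $\mathbb S^d$, and each has measure $\lesssim\delta_m^d$. Hence $m\delta_m^d\gtrsim1$, that is, $\delta_m\gtrsim m^{-1/d}$. Quasi-uniformity $\delta_m\le C\gamma_m$ then gives $\delta_m\lesssim\gamma_m\lesssim m^{-1/d}$. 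Together these yield $\delta_m\sim m^{-1/d}$.

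\emph{Step 2: the weights.} Since $w_j=1/m$ and $\delta_m^d\gtrsim m^{-1}$ by Step~1, we get $0<w_j=m^{-1}\lesssim\delta_m^d$. This is exactly the weight condition in Theorem~\ref{thm:uniformstability}.

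\emph{Step 3: the degree constraint.} Step~1 gives $m^{1/d}\lesssim\delta_m^{-1}$, so $n\lesssim m^{1/d}$ implies $n\lesssim\delta_m^{-1}$, which is \eqref{equ:3.2condition}. All hypotheses of Theorem~\ref{thm:uniformstability} are then met, and the conclusion follows. The QMC design property \eqref{equ:QMC} itself is not needed for stability; only quasi-uniformity and the equal weights enter.

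The only step with real content is Step~1, and in particular the lower bound $\delta_m\gtrsim m^{-1/d}$, which is what Step~2 needs. The covering argument handles it, using that $\sigma(B(x,t))\sim t^d$ uniformly for $0<t\le\pi$ under the normalized measure. Everything else is direct substitution into Theorem~\ref{thm:uniformstability}.
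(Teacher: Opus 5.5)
Your proposal is correct and follows the paper's own (unstated) route: the corollary comes from applying Theorem~\ref{thm:uniformstability}, using that quasi-uniformity gives $\delta_m\sim m^{-1/d}$. From this, $w_j=1/m\lesssim\delta_m^d$, and $n\lesssim m^{1/d}$ implies $n\lesssim\delta_m^{-1}$. Your volume-comparison argument fills in details the paper leaves implicit, and you are right that the QMC discrepancy bound plays no role in the stability statement.
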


\begin{remark}
This extends the theory at the interface between filtered hyperinterpolation, more general spectral multiplier methods, and QMC designs.\end{remark}

\section{Concluding remarks}

The present work proposes a weak-cubature framework for hyperinterpolation on the sphere based on Sobolev discrepancy estimates, extending classical theories that rely on exact polynomial cubature formulas or MZ inequalities. The central idea is to interpret the discretization error through the action of spectral multiplier operators on the cubature discrepancy measure. This perspective separates the approximation problem into three distinct components: approximation properties of the underlying continuous spectral operator, smoothing and localization properties of the spectral multiplier, and discrepancy estimates of the sampling measure.

From this viewpoint, stable Sobolev approximation may be achieved under substantially weaker assumptions than exact polynomial reproduction. Moreover, the framework applies uniformly to a broad family of spectral approximation operators, including sharp spectral projections, compactly supported smooth filters, Bessel potential operators, and heat kernel operators. For sufficiently localized spectral multipliers, the resulting discrete approximation operators additionally enjoy uniform \(L^\infty\)-stability.

The results also highlight a natural connection between hyperinterpolation, Sobolev discrepancy, and QMC designs. In particular, the weak cubature assumptions considered here may be interpreted in terms of worst-case integration errors in Sobolev spaces, thereby linking approximation from scattered data with contemporary QMC design theory.

\bibliographystyle{siamplain}
\bibliography{myref}
\clearpage

\end{document}